\documentclass[11pt]{amsart}

\usepackage[a4paper,hmargin=3.5cm,vmargin=4cm]{geometry}
\usepackage{amsfonts,amssymb,amscd,amstext}
\usepackage{graphicx}
\usepackage[dvips]{epsfig}
\usepackage{quoting}
\usepackage{hyperref}  

\usepackage{fancyhdr}
\usepackage{enumerate}
\usepackage{titlesec}
\usepackage{mathrsfs}

\titleformat{\section}
{\filcenter\bfseries\large} {\thesection{.}}{0.2cm}{}
\titleformat{\subsection}[runin]
{\bfseries} {\thesubsection{.}}{0.15cm}{}[.]
\titleformat{\subsubsection}[runin]
{\em}{\thesubsubsection{.}}{0.15cm}{}[.]

\usepackage[up,bf]{caption}

\newtheorem{theorem}{Theorem}[section]

\newtheorem{lemma}[theorem]{Lemma}
\newtheorem{claim}[theorem]{Claim}
\newtheorem{corollary}[theorem]{Corollary}

\theoremstyle{definition}
\newtheorem{definition}[theorem]{Definition}

\numberwithin{equation}{section}
\numberwithin{figure}{section}

\newcommand\Bcal{\mathcal{B}}
\newcommand\Ccal{\mathcal{C}}

\newcommand\Ocal{\mathcal{O}}
\newcommand\Pcal{\mathcal{P}}

\newcommand\C{\mathbb{C}}

\newcommand\R{\mathbb{R}}

\renewcommand\c{\mathbb{C}}

\newcommand\cp{\mathbb{CP}}

\newcommand\n{\mathbb{N}}
\renewcommand\r{\mathbb{R}}
\newcommand\s{\mathbb{S}}

\newcommand\z{\mathbb{Z}}

\newcommand\igot{\mathfrak{i}}

\renewcommand\igot{\mathfrak{i}}

\newcommand\Agot{\mathfrak{A}}
\newcommand\Igot{\mathfrak{I}}

\newcommand\Fgot{\mathfrak{F}}
\newcommand\Sgot{\mathfrak{S}}

\renewcommand\imath{\igot}

\newcommand\di{\partial}

\renewcommand\span{\mathrm{span}}

\newcommand\Div{\mathrm{Div}}

\newcommand\Flux{\mathrm{Flux}}

\newcommand\supp{\mathrm{supp}}
\newcommand\Qcal{\mathcal{Q}}

\def\span{\mathrm{span}}

\def\Flux{\mathrm{Flux}}

\usepackage{color}

\begin{document}


\fancyhead[LO]{Complete non-orientable minimal surfaces of finite total curvature}
\fancyhead[RE]{A.\ Alarc\'on and F.J. L\'opez}
\fancyhead[RO,LE]{\thepage}

\thispagestyle{empty}



\begin{center}
{\bf\Large Complete non-orientable minimal surfaces of finite total curvature: Approximation and interpolation}

\vspace*{5mm}

%
%
{\large\bf Antonio Alarc\'on\; and\; Francisco J. L\'opez}

\vspace*{2.5mm}
{\em In memory of William H. Meeks III}
\end{center}

%
%
\vspace*{5mm}

{\small
\noindent {\bf Abstract}\hspace*{0.1cm} 
In this paper, we prove a uniform approximation theorem with interpolation for complete non-orientable minimal surfaces of finite total curvature in the Euclidean space $\r^n$ $(n\ge 3)$. 
In particular, we establish the existence of such surfaces with the complex structure of any open   conformal non-orientable surface of finite topology and parabolic type.

\smallskip

\noindent{\bf Keywords}\hspace*{0.1cm} 
Minimal surface, non-orientable surface, Riemann surface, meromorphic function, complete surface of finite total curvature, Runge approximation.

\smallskip

\noindent{\bf Mathematics Subject Classification (2010)}\hspace*{0.1cm} 53A10, 53C42, 30D30, 32E30.

}



\section{Introduction}\label{sec:intro}  

A subject of longstanding interest in global minimal surface theory is the class of complete minimal surfaces of finite total curvature in $\R^n$  ($n \ge 3$); the literature is vast (we refer, among many others, to the surveys in \cite{Osserman1986,HoffmanKarcher1997,LopezMartin1999,MeeksPerez2012} or in \cite[Sec.\ 4.1]{AlarconForstnericLopez2021}). Classical structural theorems established by Huber \cite{Huber1957CMH} and Chern and Osserman \cite{ChernOsserman1967JAM} show that if an open Riemann surface $M$ carries a complete conformal minimal immersion of finite total curvature then $M$ is biholomorphic to $\Sigma \setminus E$, where $\Sigma$ is a compact Riemann surface and $E \subset \Sigma$ is a non-empty finite set (or, equivalently, $M$ has finite topology and parabolic type). Furthermore, the exterior derivative $dX$ of such an immersion $X:\Sigma\setminus E\to\mathbb{R}^n$ (which coincides with its $(1,0)$-part $\partial X$) is holomorphic and extends meromorphically to $\Sigma$ with an effective pole at every point in $E$. Such surfaces are proper in $\mathbb{R}^n$, as proven by Jorge and Meeks \cite{JorgeMeeks1983T}.

While orientable minimal surfaces have been intensively studied, immersed non-orientable minimal surfaces in $\mathbb{R}^n$ exhibit a significantly higher level of topological and analytical complexity. In this setting, the pioneering work of William H. Meeks III \cite{Meeks1981DMJ} laid the foundational geometric and conformal framework for complete non-orientable minimal surfaces of finite total curvature. Notably, Meeks constructed the first complete non-orientable minimal surface of finite total curvature in $\r^3$ (his well-known minimal M\"obius strip) and established the topological congruence formula governing such surfaces. Specifically, Meeks showed that an open non-orientable conformal surface $N$ can be investigated via its two-sheeted orientable conformal covering $\iota: M \to N$, where $M$ is an open Riemann surface equipped with a fixed-point-free antiholomorphic involution $\mathcal{I}: M \to M$ (the deck transformation of $\iota$) such that $N = M / \mathcal{I}$. Within this framework, every conformal minimal immersion $Z: N \to \mathbb{R}^n$ lifts to an $\mathcal{I}$-invariant conformal minimal immersion $X = Z \circ \iota: M \to \mathbb{R}^n$, that is, satisfying $X \circ \mathcal{I} = X$. Conversely, every non-orientable conformal minimal immersion $Z:N\to \r^n$ arises precisely through this quotient construction, being uniquely determined by a corresponding $\mathcal{I}$-invariant orientable minimal immersion $X:M\to \r^n$. Crucially, $Z: N \to \mathbb{R}^n$ is complete and of finite total curvature if and only if its orientable lift $X: M \to \mathbb{R}^n$ shares these properties; in this case, $M = \Sigma \setminus E$, where $\Sigma$ is a compact Riemann surface, $\mathcal{I}$ extends antiholomorphically without fixed points to $\Sigma$, and $E \subset \Sigma$ is an $\mathcal{I}$-invariant finite set ($\mathcal{I}(E) = E$).
 
Holomorphic approximation and interpolation theory is a classical and fundamental discipline in complex analysis, with deep ramifications in global differential geometry. Since the seminal works of Runge \cite{Runge1885AM}, Mittag-Leffler \cite{Mittag-Leffler1884AM}, and Mergelyan \cite{Mergelyan1951DAN} in the complex plane, and their generalizations to open Riemann surfaces by Behnke and Stein \cite{BehnkeStein1949}, Florack \cite{Florack1948}, Royden \cite{Royden1967JAM}, and Bishop \cite{Bishop1958PJM} (see also Fornæss, Forstnerič, and Wold \cite{FornaessForstnericWold2018} for a comprehensive survey), the capability to approximate continuous or holomorphic maps by global algebraic objects with exact control over zeros and poles has been a cornerstone of modern complex function theory. Via the classical Weierstrass representation formula and modern tools from Oka theory \cite{Forstneric2017}, these complex-analytic methods have been successfully imported into the global theory of conformal minimal surfaces in Euclidean space $\mathbb{R}^n$; see \cite{AlarconForstnericLopez2021} for a comprehensive monograph on the topic. In particular, approximation and interpolation theory for minimal surfaces has witnessed major progress in recent decades (see \cite[Ch.\ 3]{AlarconForstnericLopez2021}). 

In this direction, Alarc\'on and L\'opez \cite{AlarconLopez2022APDE} established general approximation results in arbitrary dimension $n \ge 3$ for complete orientable minimal surfaces of finite total curvature, including algebraic Runge-type and Mittag-Leffler-type theorems with jet interpolation and flux control; see \cite{Lopez2014TAMS,AlarconCastro-InfantesLopez2019CVPDE} for the case $n=3$. In the non-orientable setting, Alarc\'on, Forstneri\v c, and L\'opez \cite{AlarconForstnericLopez2020Memoirs} developed a comprehensive complex-analytic framework for uniform approximation of non-orientable minimal surfaces in $\mathbb{R}^n$ (see \cite{AlarconLopez2015GT} for the case $n=3$); however, these results apply to general open non-orientable surfaces without controlling the finite total curvature condition or the meromorphic behavior at the punctures. Extending these approximation results to complete non-orientable surfaces of finite total curvature posed deep technical challenges: the requirement to preserve the $\mathcal{I}$-invariance of the Weierstrass data while simultaneously controlling periods, divisors, poles, and metric completeness precluded the direct application of existing methods.   

The main purpose of this paper is to resolve this open problem. 
More precisely, we establish the following Runge-Mittag-Leffler-type uniform approximation theorem with jet interpolation (also at some ends) and full flux control for complete non-orientable conformal minimal surfaces of finite total curvature in $\mathbb{R}^n$ ($n \ge 3$) with a prescribed complex structure. 
%
%
\begin{theorem}\label{th:intro}
Let $\Sigma$ be a compact Riemann surface endowed with a fixed-point-free antiholomorphic involution $\mathcal{I}:\Sigma\to \Sigma$, and let $\varnothing\neq E\subset \Sigma$ be a finite $\mathcal{I}$-invariant subset. Also let $K\subset \Sigma\setminus E$ be an $\mathcal{I}$-invariant, smoothly bounded, Runge compact domain (possibly disconnected) and $E'$ and $\Lambda$ be a pair of disjoint (possibly empty) $\mathcal{I}$-invariant finite sets in $\mathring K$.
If $X: K\setminus E'\to \mathbb{R}^n$ ($n\geq 3$) is an $\mathcal{I}$-invariant complete conformal minimal immersion of finite total curvature, then for any $\epsilon>0$, any integer $r\ge 0$, and any group homomorphism $\mathfrak{F}:H_1(\Sigma\setminus(E\cup E'),\mathbb{Z})\to\mathbb{R}^n$ such that $\mathfrak{F}(\gamma)=\mathrm{Flux}_X(\gamma)$ for every closed curve $\gamma\subset K\setminus E'$ and $\mathfrak{F}(\mathcal{I}\circ\gamma)=-\mathfrak{F}(\gamma)$ for every closed curve $\gamma\subset \Sigma\setminus(E\cup E')$, there is an $\mathcal{I}$-invariant conformal minimal immersion $Y: \Sigma\setminus (E\cup E')\to \mathbb{R}^n$ satisfying the following conditions.
\begin{enumerate}[\rm (i)]
\item $Y$ is complete and of finite total curvature.
\smallskip
\item $Y-X$ extends harmonically to $K$ and $|Y-X|<\epsilon$ everywhere on $K$.
\smallskip
\item $Y-X$ vanishes at least to order $r$ at every point of $E'\cup \Lambda$.
\smallskip
\item $\mathrm{Flux}_Y(\gamma)=\mathfrak{F}(\gamma)$ for every closed curve $\gamma$ in $\Sigma\setminus (E\cup E')$.
\end{enumerate}
\end{theorem}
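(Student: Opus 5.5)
We will work with the Weierstrass data. Write $\Phi=\partial X=(\phi_1,\dots,\phi_n)$. This is a vector of holomorphic $1$-forms on $K\setminus E'$, extending meromorphically to $K$ with effective poles at $E'$. It satisfies $\sum_j\phi_j^2=0$ and has no common zeros on $K\setminus E'$. The $\mathcal{I}$-invariance of $X$ becomes $\mathcal{I}^*\Phi=\overline{\Phi}$.

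Conversely, let $\Psi$ be a meromorphic vector of $1$-forms on $\Sigma$ with the following properties:
\begin{itemize}
\item $\sum\psi_j^2=0$;
\item $\mathcal{I}^*\Psi=\overline{\Psi}$;
\item $\Psi$ is holomorphic with no common zeros on $\Sigma\setminus(E\cup E')$ and has an effective pole at each point of $E\cup E'$;
\item $\Re\int_\gamma\Psi=0$ and $\Im\int_\gamma\Psi=\mathfrak{F}(\gamma)$ for every closed curve $\gamma$.
\end{itemize}
Then $Y(p)=X(p_0)+2\Re\int_{p_0}^p\Psi$ is a well-defined, $\mathcal{I}$-invariant, complete conformal minimal immersion of finite total curvature. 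Completeness comes from the effective poles. $\mathcal{I}$-invariance follows from $\mathcal{I}^*\Psi=\overline\Psi$ together with the compatibility $\mathfrak{F}\circ\mathcal{I}=-\mathfrak{F}$, which makes the period conditions $\mathcal{I}$-symmetric. So the task reduces to approximating $\Phi$ uniformly on $K$ by such a $\Psi$, with three extra requirements. First, $\Psi-\Phi$ must vanish to order $r$ at $E'\cup\Lambda$, which gives (iii). Second, $\Psi-\Phi$ must have no residues there, so that $Y-X$ is harmonic on $K$, giving (ii). Third, the period conditions must hold on a homology basis, giving (iv).

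To build $\Psi$ we use the spinorial (or, for $n>3$, null quadric) parametrization. In dimension $3$, write $\Phi=(\tfrac12(1/g-g),\tfrac{\imath}{2}(1/g+g),1)\,\phi_3$. Here the invariance reads $g\circ\mathcal{I}=-1/\bar g$ and $\mathcal{I}^*\phi_3=\bar\phi_3$. In general $n$ we use a splitting $\Phi=f\,\theta$, where $f:K\to\mathbf{A}_*\subset\C^n$ is a map into the punctured null quadric and $\theta$ is a fixed $\mathcal{I}$-compatible meromorphic $1$-form on $\Sigma$, with $\mathcal{I}^*\theta=\bar\theta$ and divisor supported on prescribed sets. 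We approximate $f$ by an algebraic (meromorphic on $\Sigma$) map into $\mathbf{A}_*$ with the required pole orders at $E$. This uses an $\mathcal{I}$-equivariant Runge--Mittag-Leffler theorem with jet interpolation for meromorphic functions on $\Sigma$. That theorem is obtained by approximating $h$ by $\tfrac12(h+\overline{h\circ\mathcal{I}})$, i.e.\ by symmetrizing ordinary algebraic approximants, and interpolation is preserved because $E'\cup\Lambda$ is $\mathcal{I}$-invariant. To ensure effective poles at each point of $E$ we add, on small $\mathcal{I}$-symmetric pairs of discs around $E$, prescribed principal parts of high order; by Riemann--Roch these can be realized once pole orders are large. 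To keep the values in $\mathbf{A}_*$ we approximate in a holomorphic parametrization of the quadric, e.g.\ $f=(\tfrac12(1-h^2)\,\cdot\,,\dots)$, using functions with prescribed divisors.

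The main obstacle is the period problem together with invariance. We will handle it by the standard period-dominating spray, made $\mathcal{I}$-equivariant. Choose an $\mathcal{I}$-invariant homology basis $\mathcal{B}$ of $H_1(\Sigma\setminus(E\cup E'),\z)$: pick curves $\gamma_1,\dots,\gamma_m$ and take $\mathcal{I}\circ\gamma_i$ as well. Since the periods over $\mathcal{I}\circ\gamma$ are determined by those over $\gamma$ through conjugation, it suffices to control the periods over half the basis, as in \cite{AlarconForstnericLopez2020Memoirs}. Construct a holomorphic family $\Psi_\zeta$, $\zeta\in\C^N$, of admissible data. It is obtained by composing with flows of complete $\mathcal{I}$-compatible linear vector fields on $\mathbf{A}$, with time functions given by $\mathcal{I}$-symmetric algebraic functions that vanish to high order at $E'\cup\Lambda$. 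We require $\Psi_0=\Psi$ and that the period map $\zeta\mapsto(\int_{\gamma_i}\Psi_\zeta)_i$ be submersive at $0$, with respect to the real structure induced by $\mathcal{I}$. Such a spray exists first on a neighbourhood of $K$ union arcs completing the basis, by Mergelyan-type approximation on admissible sets, which lets us fix arbitrary periods on curves leaving $K$. It is then approximated by algebraic ones. The implicit function theorem then yields a parameter $\zeta$ near $0$ at which the periods are exactly $\imath\mathfrak{F}$, with real parts zero. On $K$ these are already the right periods because $\mathfrak{F}=\Flux_X$ there. Finally we check that small perturbation preserves the pole orders at $E$, the interpolation at $E'\cup\Lambda$, and the absence of common zeros. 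If necessary, common zeros outside $K$ are removed by a general position argument applied to the finitely many potential branch points, done $\mathcal{I}$-symmetrically. This completes the argument.
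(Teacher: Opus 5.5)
Your reduction to constructing a single global $\Psi$ is sound, but the two steps that carry all the difficulty are asserted rather than constructed. The first is the initial approximant. You need a meromorphic $\Psi$ on $\Sigma$ that is at once null, $\Igot$-invariant, zero-free on $\Sigma\setminus(E\cup E')$, and has effective poles on $E$. Symmetrizing $h\mapsto\frac12(h+\overline{h\circ\Igot})$ and adding principal parts are linear operations, so they destroy the quadratic relation. A parametrization of the quadric does not decouple the involution either. In $\Sgot^{n-1}_*$-coordinates you need $u_2=-\overline{u_1\circ\Igot}$, $u_j=\overline{u_j\circ\Igot}$ $(j\ge3)$ and $u_1u_2=\sum_{j\ge3}u_j^2$ simultaneously; for $n=3$ this is a Gauss map with $g\circ\Igot=-1/\bar g$. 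Zeros on $\Sigma\setminus K$ are not controlled by approximation on $K$, and no ``general position'' argument is available for algebraic data under these constraints. This step is exactly the paper's Lemma 3.1, which is proved by controlling divisors:
\begin{itemize}
\item approximate $f_1$ so that $[\tilde f_1\theta]$ off $K$ is a square $D^2$ with $\supp D\cap\Igot(\supp D)=\varnothing$;
\item set $\tilde f_2=-\overline{\tilde f_1\circ\Igot}$, so that $\tilde f_1\theta$ and $\tilde f_2\theta$ share no zeros;
\item for $n=3$, take $\tilde f_3=\sqrt{\tilde f_1\tilde f_2}$;
\item for $n\ge4$, restore the quadric with multipliers $h_1$, $h_2=\overline{h_1\circ\Igot}$ and $H$ satisfying $h_1h_2/F=H^2$.
\end{itemize}
These square roots exist only after reducing to the case where $K$ is a strong deformation retract of $\Sigma\setminus E$ (Claim 4.1), and you skip that reduction too.

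The second gap is the spray. Take $\Igot$-symmetric time functions $t$ (so $t\circ\Igot=\bar t$) and full data. Then $\Igot$-compatibility of $e^{tA}\Psi$ forces $A=\bar A$ in null-quadric coordinates, i.e.\ $A\in\mathfrak{so}(n,\R)\oplus\R\,\mathrm{Id}$. That algebra has no nonzero nilpotent elements, so $e^{tA}$ is transcendental in $t$. Every nonconstant meromorphic function on the compact surface $\Sigma$ has poles, and at those poles $e^{tA}\Psi$ has essential singularities. So your spray leaves the class of meromorphic data and finite total curvature is lost. Saying ``then approximate by algebraic ones'' is just a parametric version of the unsolved first step. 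The paper instead uses the polynomial deformation
\[
\big((1+\alpha w)^2u_1,\,(1+\bar\alpha w)^2u_2,\,(1+\alpha w)(1+\bar\alpha w)u_j\big),\quad w=\xi\cdot\hat f,\ \xi\in\R^N,\ \Re(\alpha)\Im(\alpha)\neq0.
\]
This preserves both the quadric and $\Igot$-invariance when $\hat f$ is a global $\Igot$-invariant meromorphic map. Since $1+\alpha w$ and $1+\bar\alpha w$ have no common zeros, new zeros can appear only on $T=\{\hat\Phi_1\hat\Phi_2=0\}$. The paper prevents them by making $\hat f$ vanish on $T\setminus K$, and it gives the $\hat f_{i,j}$ pairwise distinct pole orders on $E$ so that an effective pole survives for every $\xi$. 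Your claim that a ``small perturbation preserves pole orders and absence of zeros'' fails as stated, because the time functions are large near their poles outside $K$.

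There are also some smaller points:
\begin{itemize}
\item Condition (iii) requires $Y=X$ at the points of $E'\cup\Lambda$, so the period map must include arcs from a base point to those points (the paper's $\gamma_j^+$).
\item $\mathrm{rank}\,H_1(\Sigma\setminus(E\cup E'),\z)$ is odd, so the basis cannot consist of pairs $\{\gamma,\Igot\circ\gamma\}$; it needs an $\Igot$-invariant curve $C_0$ carrying only $n$ real conditions.
\item Period domination requires fullness, which you never arrange.
\item The correct normalization is $\Im\int_\gamma\Psi=\frac12\Fgot(\gamma)$.
\end{itemize}
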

Theorem \ref{th:intro} is new in every dimension, including the case $n=3$. Although formulated in terms of two-sheeted orientation covers and $\mathcal{I}$-invariant immersions, this result is inherently a theorem for non-orientable surfaces. 
We point out that it can be strengthened by guaranteeing Mergelyan-type approximation rather than just Runge-type approximation, as is clear from the proof. We do not pursue this here for simplicity of exposition. We also note that our proof slightly simplifies in the particular case when $n=3$, as is clearly seen in the proof of Lemma \ref{lem:core}.

Few examples of complete non-orientable minimal surfaces of finite total curvature are available in the literature (see for instance  \cite{LopezMartin1999}).
As an immediate corollary of Theorem \ref{th:intro}, we record that every open conformal non-orientable surface of finite topology and parabolic type is the underlying complex structure of a complete non-orientable minimal surface of finite total curvature in Euclidean space. The following existence result is also new even in case $n=3$.
%
%
\begin{corollary}
Given $\Sigma$, $\mathcal{I}$, $E$, and $n$ as in Theorem \ref{th:intro}, there exists a complete $\mathcal{I}$-invariant full conformal minimal immersion $\Sigma\setminus E\to \mathbb{R}^n$ of finite total curvature.
\end{corollary}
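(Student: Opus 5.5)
The corollary will follow from Theorem \ref{th:intro} by choosing a simple initial immersion on a small $\mathcal{I}$-invariant compact set, with $E'=\varnothing$, and then making sure the output is full.

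Fix a point $p\in\Sigma\setminus E$. Since $\mathcal{I}$ has no fixed points, $\mathcal{I}(p)\neq p$. Choose a small closed coordinate disc $D\ni p$ with $D\cap\mathcal{I}(D)=\varnothing$ and $D\cap E=\varnothing$, and set $K=D\cup\mathcal{I}(D)$. This set is $\mathcal{I}$-invariant and smoothly bounded. It is also Runge in $\Sigma\setminus E$, because $\Sigma\setminus(E\cup K)$ is connected and has no relatively compact components.

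On $D$, take a conformal minimal embedding $X_0:D\to\R^n$ onto a small piece of a full minimal surface. For instance, use the Weierstrass data of a non-flat minimal surface contained in no affine hyperplane, which exists for every $n\ge3$ (polynomial null curves in a coordinate $z$). Then define $X$ on $\mathcal{I}(D)$ by $X=X_0\circ\mathcal{I}$. Since $\mathcal{I}$ is antiholomorphic, $X$ remains conformal and minimal on $\mathcal{I}(D)$, and $X$ is $\mathcal{I}$-invariant on $K$. Because $E'=\varnothing$, the conditions ``complete'' and ``of finite total curvature'' on the compact set $K$ hold trivially: $X$ is smooth up to the boundary and there are no punctures in $K$.

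The flux data is trivial in this setting. Closed curves in $K$ are null-homologous, so take $\mathfrak{F}=0$ on $H_1(\Sigma\setminus E,\z)$; the antisymmetry condition then holds trivially. Apply Theorem \ref{th:intro} with $E'=\Lambda=\varnothing$, $r=0$, and some $\epsilon>0$. This gives an $\mathcal{I}$-invariant complete conformal minimal immersion $Y:\Sigma\setminus E\to\R^n$ of finite total curvature, with $|Y-X|<\epsilon$ on $K$.

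The one point needing care is fullness, which Theorem \ref{th:intro} does not state. I handle it through the harmonic extension: $Y-X$ is harmonic on a neighbourhood of $K$, so by Cauchy estimates on a slightly smaller disc $D'\subset D$, $\partial Y$ is $C^0$-close to $\partial X$ there when $\epsilon$ is small. Fullness of $X_0|_{D'}$ is an open condition. It means the components of $\partial X_0$ are $\C$-linearly independent functions, which can be tested by a nonvanishing Wronskian-type determinant at finitely many points, and these values are controlled by derivatives of $Y$ on $D'$. Alternatively, and more cleanly, fullness means $Y(D')$ is not contained in any affine hyperplane. The set of unit normals $v\in\s^{n-1}$ is compact, and $\min_v\bigl(\max_{D'}\langle X,v\rangle-\min_{D'}\langle X,v\rangle\bigr)>0$ by fullness of $X$. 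Hence for $\epsilon$ smaller than a quarter of this minimum, no hyperplane contains $Y(D')$. By analyticity, $Y(\Sigma\setminus E)$ then lies in no hyperplane, so $Y$ is full. The $\mathcal{I}$-invariance means $Y$ descends to the non-orientable quotient, as desired.
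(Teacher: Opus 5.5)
Your overall route is the one the paper intends: the corollary is presented as an immediate consequence of Theorem~\ref{th:intro}, and applying it with $K=D\cup\mathcal{I}(D)$, $E'=\Lambda=\varnothing$, $r=0$ and $\mathfrak{F}=0$ is legitimate. Your checks of $\mathcal{I}$-invariance, the Runge property and the flux compatibility are fine. The only step the theorem does not supply is fullness, and there the argument you call cleaner proves the wrong property.

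Fullness in the sense of Definition~\ref{def:full-CMI} is a complex condition on the holomorphic map $\partial Y/\theta$. Its image must not lie in a complex hyperplane of $\C^n$, i.e.\ the components of $\partial Y$ must be $\C$-linearly independent. For $n\ge 4$ this is strictly stronger than ``$Y$ lies in no affine hyperplane of $\R^n$''.

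For instance, $X(z)=(\Re z,\Im z,\Re z^2,\Im z^2)$ is a conformal minimal immersion $\C\to\R^4$ whose image lies in no real affine hyperplane. Yet $\partial X/dz=(\tfrac12,-\tfrac{\imath}{2},z,-\imath z)$ takes values in the complex plane spanned by $(1,-\imath,0,0)$ and $(0,0,1,-\imath)$.

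So your oscillation argument over unit vectors $v\in\s^{n-1}$ does not prove the corollary when $n\ge 4$; for $n=3$ the two notions do coincide. For the same reason, choosing $X_0$ merely as a minimal surface contained in no affine hyperplane is not enough.

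The fix is to use your first (Wronskian/determinant) argument consistently:
\begin{enumerate}[\rm (1)]
\item Choose $X_0$, e.g.\ from a polynomial null curve, so that the components of $\partial X_0/dz$ are $\C$-linearly independent.
\item Pick $p_1,\dots,p_n\in D'$ with $\det\big((\partial X_0/dz)_i(p_j)\big)\neq 0$.
\item Since $Y-X$ is harmonic near $K$ with $|Y-X|<\epsilon$ on $K$, Cauchy estimates give $\partial Y/dz\to\partial X/dz$ uniformly on $D'$ as $\epsilon\to 0$. Hence the determinant stays nonzero for small $\epsilon$.
\item The identity principle on the connected surface $\Sigma\setminus E$ then shows that $\partial Y/\theta$ is full.
\end{enumerate}
With this change your proof is complete.
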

The analogue of Theorem \ref{th:intro} in the orientable setting was established in \cite[Theorem 1.2]{AlarconLopez2022APDE}. We emphasize that the proof strategy in \cite{AlarconLopez2022APDE} is fundamentally incompatible with preserving $\mathfrak{I}$-invariance, and thus cannot be adapted to the non-orientable case. (An alternative proof of \cite[Theorem 1.2]{AlarconLopez2022APDE} in the case $E'=\varnothing$ was obtained by Alarc\'on and L\'arusson in \cite{AlarconLarusson2025Crelle}, where a different method is used to establish analogous results for the more general family of regular immersions into $\C^n$ directed by algebraically elliptic cones; see \cite{AlarconForstnericLarusson2026JLMS} for further developments in this direction. This method, relying on algebraic Oka theory, also does not seem to adapt to the non-orientable framework.) Our approach here is substantially different, relying on a genuine algebraic approximation strategy specifically tailored to the $\mathfrak{I}$-invariant framework.

As in \cite{AlarconLopez2022APDE}, the proof of Theorem \ref{th:intro} in Section \ref{sec:proof} combines the technique of sprays for controlling periods with classical algebraic approximation for meromorphic functions on compact Riemann surfaces. A main novelty is an approximation result for $\mathfrak{I}$-invariant meromorphic $1$-forms with values in the null quadric $\mathfrak{A}_*^{n-1}$ \eqref{eq:nullquadric} directing minimal surfaces, which we establish in Section \ref{sec:core} (see Lemma \ref{lem:core}). This enables us to approximate $\partial X$ by an $\mathfrak{I}$-invariant meromorphic $1$-form $\phi$ on $\Sigma$ taking values in $\mathfrak{A}_*^{n-1}$, which is holomorphic on $\Sigma\setminus (E\cup E')$ with a pole at every point in $E\cup E'$, and such that $\phi-\partial X$ is holomorphic on $K$. 
With this in hand, the main task in the proof is to deform $\phi$ within $\mathfrak{A}_*^{n-1}$ to ensure that its real part is exact, the primary complication being to prevent the appearance of unwanted zeros without losing the $\Igot$-invariance. This requires a substantially more involved construction of period-dominating sprays through successive approximation stages (see Steps 2 to 5 in the proof).


\section{Preliminaries and notation}\label{sec:prelim}

We write $\n=\{1,2,3,\ldots\}$, $\z_+=\n\cup\{0\}$, and $\imath=\sqrt{-1}$, and denote by $\cp^1=\c\cup\{\infty\}$ the Riemann sphere. We shall use the symbols $\Re$ and $\Im$ to denote, respectively, the real and the imaginary part, and identify $\c^n$ with $\r^{2n}$. 
We denote by $|\cdot|$ the Euclidean norm in $\r^n$.
The uniform norm (or $\sup$ norm) of  a map $f\colon X\to \r^n$ on a set $X$ is  
\[
\|f\|_{X}=\sup  \{|f(x)|\colon x\in X\}\in [0,+\infty].
\]
If $f,g\colon X\to \r^n$ are maps,   the notation $f\approx g$ shall mean that $\|f-g\|_X$ is so close to $0$   that no significant deviation  between $f$ and $g$  can be found  in a given argumentation. In this case we say that $f$ approximates $g$ on $X$.
\begin{definition}\label{de:full}
Given a set $X$, a map   $X\to \c^n$  is said to be {\em full} if its image lies in no affine hyperplane of $\C^n$. Full maps $X\to\cp^n$ are defined in the same way. 
\end{definition}
Assume that $X$ is a topological space.  A {\em Jordan arc} in  $X$ is an embedding $ [0,1]\to X$; an open Jordan arc in $X$ is an embedding  $(0,1)\to X$.  Continuous maps $C\colon \s^1\to X$ are said to be closed curves in $X$;  if in addition $C$ is an embedding then  the closed curve  is said to be simple or a {\em Jordan curve}. Usually we shall identify arcs and curves with their image.  We denote by $H_1(X,\z)$ the first homology group with integer coefficients on $X$.  

A smooth surface is said to be {\em open} if it is not compact and has no boundary. Throughout this paper surfaces are considered to have no boundary unless the contrary is indicated.
Assume that $M$ is an (open) smooth surface.
%
%
\begin{definition}\label{de:Runge}
A nonempty (possibly disconnected) compact set $S$ in  $M$ is said to be {\em Runge} if $M\setminus S$ has no relatively compact components.
\end{definition}

\subsection{Divisors and function spaces}\label{ss:spaces}
Given a  set $X$, we denote by $\Div(X)$ the free commutative group of {\em finite} divisors of $X$ with multiplicative notation:
\[
	\Div(X)=\Big\{\prod_{j=1}^k q_j^{n_j}\colon k\in \n,\; q_j\in X,\; n_j\in \z \Big\}.
\]
Here,  $q^0=1$ for all $q\in X$. 
Given $D=\prod_{j=1}^k q_j^{n_j}\in\Div(X)$,  the set $\supp  (D)=\{q_j\colon n_j\neq 0\}\subset X$ is said to be the {\em support} of $D$. The divisor $D$ is said to be {\em effective} if $n_j\geq 0$ for all $j=1,\ldots,k$. We write $D_1\geq D_2$ to mean that $D_1 D_2^{-1}$ is   effective.  

Let  $M$ and $N$ be a pair of complex manifolds and $S\subset M$ be a subset. We denote by $\Ccal^0(S,N)$   the space of continuous maps $S\to N$, and  write  $\Ccal^0(S)=\Ccal^0(S,\c)$. As it is customary, we denote by $\Ocal(S,N)$ the space of all holomorphic maps from some neighborhood of $S$ in $M$ (depending on the function) into $N$.  

We assume in the sequel that $M$ is a Riemann surface (either open or compact). For any subset $S\subset M$ we denote $\Ocal(S)=\Ocal(S,\c)$, whereas $\Ocal_\infty(S)$ will denote  the space of all meromorphic functions on some neighborhood of $S$ in $M$.   For a finite subset $E\subset \mathring S$, we denote 
\begin{equation}\label{eq:OcalinftySE}
	 \Ocal_\infty(S|E)=\Ocal_\infty(S)\cap \Ocal(S\setminus E);
\end{equation}
i.e., $\Ocal_\infty(S|E)$ is the space of all meromorphic functions on a neighborhood of $S$ which have poles (if any) only at points in $E$.
Likewise, we denote by $\Omega(S)$  the space of all holomorphic $1$-forms on some neighborhood of $S$ in $M$, $\Omega_\infty(S)$  the space of all meromorphic $1$-forms on some neighborhood of $S$ in $M$, and   $\Omega_\infty(S| E)=\Omega_\infty(S)\cap\Omega(S\setminus E)$.

Assume that the set $S\subset M$ is compact. For any $f\in \Ocal_\infty(S)$, $f\not\equiv 0$, $f\not\equiv\infty$, we call $Z(f)$ and $ P(f)$   the (finite) sets of zeros and poles of $f$ in $S$, and write $s_p\in \n$  for the zero or pole order of $f$ at $p$ for all $p\in Z(f)\cup P(f)$ . We set 
 \[
 	[f]_0=\prod_{p\in Z(f)} p^{s_p}\quad \text{and}\quad [f]_\infty=\prod_{p\in P(f)} p^{s_p}
\]
the effective divisors in $\Div(S)$ of zeros and poles of $f$ in $S$, respectively, and   
\[
	[f]:=\frac{[f]_0}{[f]_\infty}\in \Div(S)
\]  the divisor of $f$ in $S$. (We do not use the more customary parenthetical notation for divisors in order to avoid ambiguities.) We use the same notation for the corresponding divisors of a nonzero meromorphic $1$-form on $S$. 

For each effective divisor $D\in \Div(S)$   we set
\begin{equation}\label{eq:OcalDS}
	\Ocal_{D}(S)=\{f\in   \Ocal(S)\colon  [f]\geq D\}.
\end{equation}

Let $n\ge 3$ be an integer. We denote by
 \begin{equation}\label{eq:nullquadric1}
	\Sgot^{n-1}_*=\Big\{(u_1\ldots,u_n)\in \c^n\setminus\{0\}\colon u_1 u_2 = \sum_{j=3}^n u_j^2\Big\}.
\end{equation}
The punctured complex quadric $\Sgot^{n-1}_*$ is canonically identified with the punctured null quadric
 \begin{equation}\label{eq:nullquadric}
\Agot^{n-1}_*=\Big\{(z_1\ldots,z_n)\in \c^n\setminus\{0\}\colon \sum_{j=1}^n z_j^2=0\Big\}
\end{equation}
by the natural linear biholomorphism  $\Xi \colon \c^n\setminus\{0\}\to \c^n\setminus\{0\}$ given by
 \begin{equation}\label{eq:Thetabiho}
  \Xi(z_1,z_2,\ldots,z_n):=\big(-z_1+ \imath z_2,z_1+\imath z_2,z_3,\ldots, z_n\big),
   \end{equation}
which maps $\Agot^{n-1}_*$ into $\Sgot^{n-1}_*$. The punctured null quadric $\Agot_*^{n-1}$ is a complex homogeneous manifold, and hence an Oka manifold (see \cite[Example 5.6.2]{Forstneric2017}).

For any map $f=(f_1,\ldots,f_n)\in\Ccal^0(S,\cp^1)^n$ write
\begin{equation}\label{eq:finfi}
f^{-1}(\infty)=\bigcup_{j=1}^n f_j^{-1}(\infty),
\end{equation}
 and denote
 \[
\Ocal_\infty(S,\Sgot^{n-1}_*)=\{f\in \Ocal_\infty(S)^n\colon f\big(S\setminus f^{-1}(\infty)\big)\subset \Sgot^{n-1}_*\}.
\]
  Given a finite set $E\subset\mathring S$,  we denote
  \[
  \Ocal_\infty(S|E,\Sgot^{n-1}_*)=  \Ocal_\infty(S,\Sgot^{n-1}_*)\cap   \Ocal(S\setminus E,\Sgot^{n-1}_*).
  \] 
   The spaces $\Ocal_\infty(S,\Agot^{n-1}_*)$ and   $\Ocal_\infty(S|E,\Agot^{n-1}_*)$  are defined in the same way. Likewise, the spaces of   $1$-forms   $\Omega(S,\Sgot_*^{n-1})$, $\Omega_{\infty}(S,\Sgot_*^{n-1})$ and $\Omega_{\infty}(S|E,\Sgot_*^{n-1})$ with values in $\Sgot_*^{n-1}$, as well as the ones with values in $\Agot_*^{n-1}$, are defined in the obvious way. In particular, $\Omega_{\infty}(S|E,\Sgot_*^{n-1})$ consists of the meromorphic $1$-forms   $(\phi_1,\ldots,\phi_n)\in \Omega_{\infty}(S|E)^n$ such that $\phi_1 \phi_2 = \sum_{j=3}^n \phi_j^2$.
   
   Passing to the non-orientable framework, assume in addition that $\Igot:M\to M$ is a fixed-point-free antiholomorphic involution. A subset $S\subset M$ is $\Igot$-invariant if $\Igot(S)=S$. If $S\subset M$ is $\Igot$-invariant, a map $X:S\to \r^n$ is $\Igot$-invariant if $X\circ \Igot=X$, a map $f:S\to \c^n$ is $\Igot$-invariant if $f\circ \Igot=\bar f$, and a $\c^n$-valued meromorphic  $1$-form $\phi$ on $S$ is $\Igot$-invariant if $\Igot^*\phi=\bar \phi$. Finally, we set
$\Ocal_\Igot(S)=\{f\in \Ocal(S)\colon f \text{ is $\Igot$-invariant}\}$ and $\Omega_\Igot(S)=\{\phi\in \Omega(S) \colon \phi \text{ is $\Igot$-invariant}\}$. Likewise we define $\Ocal_{\infty,\Igot}(S)$,  $\Omega_{\infty,\Igot}(S)$,  and for any $\Igot$-invariant finite set $E\subset \mathring S$, $\Ocal_{\infty,\Igot}(S|E)$ and $\Omega_{\infty,\Igot}(S|E)$, for the corresponding spaces of meromorphic objects. A divisor $D=\prod_{j=1}^k q_j^{n_j}\in\Div(S)$ is $\Igot$-invariant if $\Igot(D):=\prod_{j=1}^k \Igot(q_j)^{n_j}$ equals $D$, we denote by $\Div_\Igot(S)$ the set of all such divisors. If $D\in \Div_\Igot(S)$,  we set $\Ocal_{D,\Igot}(S)=\Ocal_{D}(S)\cap \Ocal_\Igot(S)$. Finally, we set
\[
\Ocal_{\infty,\Igot}(S,\Sgot^{n-1}_*)=\{f\in\Ocal_{\infty}(S,\Sgot^{n-1}_*)\colon  f_2=-\bar f_1\circ \Igot, \, f_j\in \Ocal_{\infty,\Igot}(S), \,3\leq j\leq n \},
\]
and
\[
\Ocal_{\infty,\Igot}(S,\Agot^{n-1}_*)= \Ocal_{\infty}(S,\Agot^{n-1}_*) \cap \Ocal_{\infty,\Igot}(S)^n.
\]
Observe that $\Xi(\Ocal_{\infty,\Igot}(S,\Agot^{n-1}_*))=\Ocal_{\infty,\Igot}(S,\Sgot^{n-1}_*)$.
The spaces $ \Ocal_{\infty,\Igot}(S|E,\Sgot^{n-1}_*)$ and  $ \Ocal_{\infty,\Igot}(S|E,\Agot^{n-1}_*)$, as well as  $ \Omega_{\Igot}(S,\Agot^{n-1}_*)$, $ \Omega_{\Igot}(S,\Sgot^{n-1}_*)$, 
$ \Omega_{\infty,\Igot}(S,\Agot^{n-1}_*)$, $ \Omega_{\infty,\Igot}(S,\Sgot^{n-1}_*)$,
 $ \Omega_{\infty,\Igot}(S|E,\Sgot^{n-1}_*)$ and  $ \Omega_{\infty,\Igot}(S|E,\Agot^{n-1}_*)$, are defined analogously.


\subsection{$\Igot$-invariant minimal surfaces in $\r^n$}\label{sec:minimal}
Let $M$ be an open Riemann surface.
 A map  $X=(X_1,\ldots,X_n)\colon M\to\r^n$ $(n\ge 3)$  is a  conformal minimal immersion if and only if  $X$ is harmonic, and its complex derivative $\di X\in\Omega(M)^n$ (i.e., the $(1,0)$-part of the exterior differential $dX$ of $X$) vanishes nowhere on $M$ and satisfies $\sum_{j=1}^n\di X_j^2\equiv 0$. Given a holomorphic $1$-form $\theta$ on $M$ with no zeros, the last two conditions are equivalent to $\di X/\theta\in \Ocal(M,\Agot^{n-1}_*)$ or to $\Xi\big( \di X/\theta\big)\in \Ocal(M,\Sgot^{n-1}_*)$; see \eqref{eq:Thetabiho}. Moreover, in this case $X\colon M\to\r^n$ is given by the formula
\[
	M\ni p\longmapsto X(p_0)+\Re\int_{p_0}^p 2\di X
\]
for any fixed base point $p_0\in M$. This is known in the literature as the {\em Enneper-Weierstrass representation} formula for minimal surfaces in $\r^n$ (see e.g.\ \cite{Osserman1986}).
 A map $X:S\to \r^n$ from a subset $S\subset M$ is said to be a {\em conformal minimal immersion} if it extends as a conformal minimal immersion to some open neighborhood of $S$ in $M$ (depending on $X$).

%
%
%
\begin{definition}\label{def:full-CMI}
We say that a conformal minimal immersion $X:M\to\r^n$ is {\em full} if 
the holomorphic map $\di X/\theta\colon M\to\c^n$ is full for any holomorphic $1$-form $\theta$ vanishing nowhere on $M$.
\end{definition}
The group homomorphism $\Flux_X\colon H_1(M,\z)\to\r^n$ given by
\[
	\Flux_X(\gamma)=2\int_\gamma\Im (\di X)=-2\imath\int_\gamma\di X
	\quad
	\text{for every loop $\gamma\subset M$},
\]
is said to be the {\em flux map} (or just the {\em flux}) of $X$. 
A conformal minimal immersion $X:M\to \r^n$  is said to be of {\em finite total curvature} (acrostically, FTC) if
\[
\int_M K\, dA>-\infty,
\]
where $K$ is the Gauss curvature of the Riemannian metric 
\begin{equation}\label{eq:ds2}
	ds^2=2\sum_{j=1}^n |\di X_j|^2
\end{equation}
induced on $M$ by the Euclidean one via $X$, and $dA$ is the area element of $ds^2$. 

The conformal minimal immersion $X:M\to\r^n$ is called {\em complete} if the metric $ds^2$ is complete in the classical sense of Riemannian geometry. Assume now that $M$ is a Riemann surface with compact boundary $bM\subset M$ (possibly $bM=\varnothing$). If $M$ carries a {\em complete} conformal minimal immersion $X:M\to \r^n$ of FTC (here, $X$ is complete if and only if  $X\circ \gamma$ has infinite Euclidean length for any divergent arc $\gamma\colon[0,1)\to M$), then the classical results by Huber \cite{Huber1957CMH} and Chern-Osserman \cite{ChernOsserman1967JAM} impose the following conditions.
\begin{enumerate}[\rm (i)]
\item $M$ is conformally equivalent to $R\setminus E$, where $R$ is  a compact Riemann surface with compact boundary $bR$ and $E\subset R\setminus bR$ is a finite subset.
\smallskip
\item $\di X$ extends meromorphically to $R$ with an effective pole at each point of $E$. 
\end{enumerate}
Conversely, if $R$ and $E$ are as in {\rm (i)} and if $X\colon R\setminus E \to \r^n$ is a conformal minimal immersion satisfying {\rm (ii)}, then $X$ is complete and of FTC.

If an open Riemann surface $M$ is endowed with a fixed-point-free antiholomorphic involution $\Igot$, $\theta$ is a nowhere vanishing  holomorphic $\Igot$-invariant  $1$-form on $M$ (see \cite[Corollary 6.5]{AlarconLopez2015GT}),  and $X:M\to \r^n$ is an $\Igot$-invariant conformal minimal immersion, then $\di X/\theta\in \Ocal_\Igot(M,\Agot^{n-1}_*)$ and  $\Xi\big( \di X/\theta\big)\in \Ocal_\Igot(M,\Sgot^{n-1}_*)$; see \eqref{eq:Thetabiho}. Furthermore, $\Flux_X(\Igot\circ \gamma)=-\Flux_X(\gamma)$ for every loop $\gamma\subset M$. Assume now  that $M$ is a Riemann surface with compact boundary $b M\subset M$ and $X:M\to \r^n$ is an $\Igot$-invariant complete conformal minimal immersion of FTC. Then $M=R\setminus E$ as above,   $\Igot$ extends antiholomorphically and without fixed points to $R$, and $\Igot(E)=E$.


\section{Approximation of $\Igot$-invariant meromorphic $1$-forms with values in the null quadric}\label{sec:core}

The following result will be crucial in the proof of Theorem \ref{th:intro} in the next section.
%
%
\begin{lemma}\label{lem:core}
Let $\Sigma$ be a compact Riemann surface (without boundary) endowed with a fixed-point-free antiholomorphic involution $\Igot:\Sigma\to \Sigma$, $E\subset \Sigma$ an $\Igot$-invariant finite set,  $K\cup \subset \Sigma\setminus E$    an $\Igot$-invariant smoothly bounded compact domain that is a strong deformation retract of $\Sigma\setminus E$, and $E',\Lambda\subset \mathring K$ a pair of disjoint $\Igot$-invariant finite sets. 
Then every  $\Phi\in \Omega_{\infty,\Igot}(K|E',\Sgot_*^{n-1})$ can be  approximated uniformly on $K$ by meromorphic $1$-forms  $\Psi\in \Omega_{\infty,\Igot}(\Sigma|E'\cup E,\Sgot_*^{n-1})$ such that $\Psi$ has an effective pole at every point in $E$ and $\Psi-\Phi$ is holomorphic on $K$   and   vanishes to any given finite order $r$ at every point in $E'\cup \Lambda$.
\end{lemma}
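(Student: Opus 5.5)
The plan is to reduce the problem to approximating scalar meromorphic objects on $\Sigma$ with prescribed divisors and jets, using the structure of the quadric $\Sgot_*^{n-1}$ together with the $\Igot$-symmetry. Write $\Phi=(\phi_1,\ldots,\phi_n)$, so that $\phi_2=-\overline{\Igot^*\phi_1}$, $\phi_j\in\Omega_{\infty,\Igot}(K|E')$ for $j\ge 3$, and $\phi_1\phi_2=\sum_{j\ge3}\phi_j^2$. The invariance $\Igot^*\phi_2=\overline{-\phi_1}$ is built into the formula $\phi_2=-\overline{\Igot^*\phi_1}$. Hence producing $\Psi$ amounts to two tasks. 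First, find $\Igot$-invariant forms $\psi_j$ ($j\ge3$) on $\Sigma$. Second, find a single meromorphic $1$-form $\psi_1$ on $\Sigma$ with
\[
-\psi_1\,\overline{\Igot^*\psi_1}=R:=\sum_{j\ge3}\psi_j^2 .
\]
Note that $R$ is an $\Igot$-invariant meromorphic quadratic differential, with $\Igot^*R=\bar R$.

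\emph{Step 1 (linear part).} By the classical Runge--Mittag-Leffler theorem with jet interpolation on compact Riemann surfaces (using that $K$ is Runge, which follows since $K$ is a deformation retract of $\Sigma\setminus E$), I would approximate each $\phi_j$, $j\ge 3$, uniformly on $K$ by forms $\psi_j\in\Omega_\infty(\Sigma|E\cup E')$. These are chosen so that $\psi_j-\phi_j$ is holomorphic on $K$, vanishes to order $r$ on $E'\cup\Lambda$, and $\psi_j$ has poles at $E$ of high prescribed order. Replacing $\psi_j$ by $\frac12(\psi_j+\overline{\Igot^*\psi_j})$ preserves all of these properties, because $E,E',\Lambda,K$ and $\phi_j$ are $\Igot$-invariant. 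So we may assume $\psi_j$ is $\Igot$-invariant. The pole orders at $E$ are taken large enough and unequal across components so that $R$ has an effective pole at every point of $E$. Generic choices also give three further properties: $R\not\equiv0$; near $E'\cup\Lambda$, $R$ agrees with $\phi_1\phi_2$ to order $r$; and the zeros of $R$ in $\Sigma\setminus K$ are simple. These zeros then come in pairs $\{p,\Igot(p)\}$ with $p\neq\Igot(p)$, since $\Igot$ has no fixed points.

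\emph{Step 2 (divisor splitting).} On $K$ we already have a splitting $[R]|_K\approx[\phi_1]+\Igot[\phi_1]$. Here, by Hurwitz, the zeros of $R$ near each zero of $\phi_1\phi_2$ are distributed in $\Igot$-symmetric fashion close to those of $\phi_1$ and $\phi_2$. I would let $D$ be a divisor consisting of three parts: the part near $[\phi_1]|_K$; one point $p$ from each pair $\{p,\Igot(p)\}$ of zeros outside $K$; and half the pole order of $R$ at each pair $\{e,\Igot(e)\}\subset E$. This gives $D+\Igot(D)=[R]$. We then need $D$ to be the divisor of a meromorphic $1$-form, i.e.\ linearly equivalent to the canonical class. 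I expect this to be the main obstacle. The degree is automatically correct, since $\deg R=4g-4$. The Abel--Jacobi condition is not automatic, however. To enforce it I would keep free parameters in Step 1. Concretely, I would add to one $\psi_j$ (two when $n\ge4$, using $\psi_3,\psi_4$) small $\Igot$-invariant meromorphic forms supported away from $K$, i.e.\ vanishing to order $r$ on $E'\cup\Lambda$ and holomorphic on $K$. I would then use the resulting holomorphic family of zero sets outside $K$, together with the choice of which point of each pair enters $D$, to make the Abel--Jacobi image of $D-\kappa$ vanish. The condition to check is a submersivity (period-dominance) statement for the Abel--Jacobi map restricted to this family, in the spirit of the sprays in \cite{AlarconLopez2022APDE}. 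For $n=3$ this step simplifies: $R=\psi_3^2$ has only double zeros, so one can simply take $D=[\psi_3]+\tfrac12([g]_0-[g]_\infty)$, where $g$ is the Meeks-type Gauss map.

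\emph{Step 3 (normalization).} Given a meromorphic $1$-form $\eta$ with $[\eta]=D$, the function $h=-R/(\eta\,\overline{\Igot^*\eta})$ has no zeros or poles. It is therefore a constant, and it is real since $h\circ\Igot=\bar h$. Next I would approximate $\phi_1/\eta$ on $K$: this is a nonvanishing holomorphic function on $K$ with jets determined at $E'\cup\Lambda$. Write it as $e^{u}$ on $K$ (possible after adjusting $D$ by the choices in Step 2 so that the relevant periods vanish). Approximate $u$ by a function $v$ holomorphic on $\Sigma\setminus(E\cup E')$ satisfying $v\circ\Igot=-\bar v$ and having prescribed jets; this is a linear Runge problem, symmetrized as before. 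One must then replace $e^{v}$ by a meromorphic correction. I would do this by moving finitely many points of $D$ outside $K$ in $\Igot$-antisymmetric pairs, so that the multiplier becomes $f/\overline{f\circ\Igot}$ with $f$ meromorphic. Finally, $\psi_1=\sqrt{h}\,f\eta$ with $h>0$; positivity holds because $h$ is close to $1$ on $K$ by the approximation. Setting $\psi_2=-\overline{\Igot^*\psi_1}$ gives $\Psi=(\psi_1,\ldots,\psi_n)\in\Omega_{\infty,\Igot}(\Sigma|E\cup E',\Sgot_*^{n-1})$ with all the required properties. The nonvanishing of $\Psi$ on $\Sigma\setminus(E\cup E')$ follows because a common zero would force a zero of $\psi_1$ outside $[R]$-points, and by construction $[\psi_1]=D\le[R]$ has only zeros where $R$ vanishes, which are simple for $\psi_1$ outside $K$ where the other $\psi_j$ are generically nonzero.
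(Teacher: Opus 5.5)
You have a genuine gap, and it sits exactly at what you call the main obstacle.

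Once $\psi_3,\dots,\psi_n$ (hence $R$) are fixed, you need a divisor $D$ with $D\,\Igot(D)=[R]$ that is linearly equivalent to a canonical divisor $\kappa$. Since $[R]\sim 2\kappa$ and $\Igot(\kappa)\sim\kappa$, the defect $x=\mathrm{AJ}(D-\kappa)$ satisfies $\Igot_*x=-x$. So $x$ lives in a compact real $g$-dimensional, possibly disconnected, subgroup of the Jacobian. There is no reason for it to be near $0$: it depends on the zeros of $R$ off $K$, which approximation on $K$ does not control.

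A submersivity statement only gives an open set of attainable defects around the initial one, so it cannot produce $x=0$. The spray arguments in Section~\ref{sec:proof} work only because the core already has nearly the right periods, by approximation on $\Gamma$. The swaps $p\leftrightarrow\Igot(p)$ move $x$ only within a finite set.

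Step 3 reintroduces obstructions of the same kind:
\begin{enumerate}[\rm (1)]
\item A logarithm of $\phi_1/\eta$ on $K$ requires vanishing winding numbers along $H_1(K,\z)$, which you cannot arrange.
\item The constant $h$ is determined by $D$ up to a positive factor, since rescaling $\eta$ multiplies $h$ by a positive number. So $h>0$ does not follow from approximation.
\item The multiplier $f/\overline{f\circ\Igot}$ must have its divisor off $K$ supported on the points you are allowed to swap. This is again a divisor-realization problem with only discrete freedom.
\end{enumerate}
Your $n=3$ shortcut is circular: a meromorphic $g$ with $g\circ\Igot=-1/\bar g$ and the right divisor is precisely what has to be constructed.

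The missing idea is never to prescribe the full divisor of any component, and to keep the components $j\ge 3$ flexible through a common factor. The paper takes an $\Igot$-invariant $\theta$ with effective poles at $E$ and no zeros on $K$. It then uses \cite[Proposition 4.1]{AlarconLopez2022APDE}, which gives approximation with interpolation where the zero divisor off $K$ is a prescribed divisor times a perfect square. This yields a global $\tilde\eta_1=\tilde f_1\theta\approx\phi_1$ whose zero divisor on $\Sigma\setminus(K\cup E)$ is $D^2$ with $\supp(D)\cap\Igot(\supp(D))=\varnothing$. Then $\tilde\eta_2=-\overline{\Igot^*\tilde\eta_1}$ has no common zeros with $\tilde\eta_1$ off $K\cup E$.

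For $n=3$ the order is the reverse of yours: $\psi_3=\sqrt{\tilde\eta_1\tilde\eta_2}$ exists because this quadratic differential has even divisor and is close to $\phi_3^2$ on $K$, a strong deformation retract of $\Sigma\setminus E$.

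For $n\ge 4$ the $\phi_j$ are approximated independently, with high-order jet control at the zeros of $\phi_1\phi_2$ in $K$ and of $\tilde\eta_1\tilde\eta_2$ off $K$. The mismatch $F=\sum_{j\ge3}\tilde\phi_j^2/(\tilde\eta_1\tilde\eta_2)$ is close to $1$ on $K$ with divisor $B\,\Igot(B)$. One takes $h_1\approx 1$ with $[h_1]_0=BB_1^2$, $h_2=\bar h_1\circ\Igot$, $H=\sqrt{h_1h_2/F}$, and $\Psi=(h_1\tilde\eta_1,h_2\tilde\eta_2,H\tilde\phi_3,\dots,H\tilde\phi_n)$. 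The free square factors $D^2$ and $B_1^2$ absorb exactly the linear-equivalence obstruction you run into. The only global condition left is the existence of square roots, which is automatic from even divisors and closeness on $K$.
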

Here the approximation means that $\Psi/\theta$ is close to $\Phi/\theta$ uniformly on $K$ for any given $\theta\in  \Omega(\Sigma\setminus E)$ without zeroes.
%
%
\begin{proof} 
Write $E=E_0\cup \Igot(E_0)$ for some $E_0\subset E$ with $E_0\cap \Igot(E_0)=\varnothing$. We begin with the following claim.
\begin{claim}\label{cl:theta}
There is an $\Igot$-invariant meromorphic $1$-form $\theta$ on $\Sigma$ that is holomorphic on $\Sigma\setminus E$, has an effective pole at every point in $E$, and vanishes nowhere on $K$.
\end{claim}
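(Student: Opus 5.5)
The plan is to build $\theta$ from an $\Igot$-invariant meromorphic function multiplied by a fixed $\Igot$-invariant meromorphic $1$-form. The construction uses Riemann--Roch together with the symmetrization $f\mapsto f+\overline{f\circ\Igot}$.

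\emph{Step 1: a reference form.} We fix a nonzero meromorphic $1$-form $\omega_0$ on $\Sigma$ and set $\omega=\omega_0+\overline{\Igot^*\omega_0}$. Here $\overline{\Igot^*\omega_0}$ is again a meromorphic $1$-form because $\Igot$ is antiholomorphic, and $\omega$ is $\Igot$-invariant. To ensure $\omega\not\equiv 0$, we replace $\omega_0$ by $\imath\omega_0$ if necessary: the two symmetrizations are $\omega$ and $\imath(\omega_0-\overline{\Igot^*\omega_0})$, and they cannot both vanish unless $\omega_0=0$. The divisor $[\omega]$ is then $\Igot$-invariant and has finite support.

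\emph{Step 2: a correcting function.} We seek $f\in\Ocal_{\infty,\Igot}(\Sigma)$ such that $\theta=f\omega$ satisfies the claim. This requires the following divisor conditions:
\begin{enumerate}[\rm (a)]
\item at each $p\notin E$, $\mathrm{ord}_p f\geq -\mathrm{ord}_p\omega$;
\item at each $p\in E$, $\mathrm{ord}_p f\leq -\mathrm{ord}_p\omega-1$;
\item at each $p\in K$, $\mathrm{ord}_p f=-\mathrm{ord}_p\omega$.
\end{enumerate}

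Choose $N$ large and let $D=[\omega]^{-1}\prod_{q\in E}q^{N}$, which is $\Igot$-invariant. By Riemann--Roch, $L(D)=\{f: [f]\geq D^{-1}\}$ is a complex vector space of dimension $\deg D+1-g$ once $N$ is large. It is closed under $f\mapsto\overline{f\circ\Igot}$, which is an antilinear involution. Hence its fixed real subspace $L_\Igot(D)$ has real dimension equal to $\dim_\C L(D)$, and this dimension grows with $N$.

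Condition (b) holds for a generic element of $L_\Igot(D)$. Indeed, $L(D)$ contains functions attaining the maximal pole order at each $q\in E$, because the subspace $L(Dq^{-1})$ has codimension one for large $N$. The elements of $L_\Igot(D)$ that fail (b) at some $q$ then form a finite union of proper real subspaces, since the real structure is compatible with these complex hyperplanes and pairs $q$ with $\Igot(q)$.

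\emph{Step 3: nonvanishing on $K$, the main obstacle.} Condition (c) says that the holomorphic function $f\cdot(\omega/\text{local coordinate})$ has no zeros on $K$. This is an open condition, but genericity alone gives only finitely many zeros, not none. Here we use that $K$ is a strong deformation retract of $\Sigma\setminus E$, so every component of $\Sigma\setminus K$ contains a point of $E$.

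First, on a neighborhood of $K$ we find a nowhere vanishing holomorphic $\Igot$-invariant $1$-form $\theta_0$, using \cite[Corollary 6.5]{AlarconLopez2015GT} on the open surface $\Sigma\setminus E$.

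Next, we approximate $\theta_0/\omega$ uniformly on $K$, in the relative sense so that the approximant has the same zeros and poles on $K$ as $\theta_0/\omega$, by functions in $L_\Igot(D)$. This is a Runge-type approximation with poles only in $E$, which is available because $K$ is Runge in $\Sigma\setminus E$. We symmetrize by $\tfrac12(g+\overline{g\circ\Igot})$, which preserves the approximation since $\theta_0/\omega$ is $\Igot$-invariant. We also impose interpolation of the divisor of $\omega$ on $K$, which takes finitely many linear conditions.

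By Hurwitz's theorem, a close approximation $f$ then makes $f\omega$ zero-free on $K$. Finally, a small generic perturbation within $L_\Igot(D)$ achieves (b) while keeping (c).
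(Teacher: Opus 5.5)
Your argument is sound in substance but is organized differently from the paper's. The paper takes $\tau=\tau_0+\Igot^*(\bar\tau_0)$, where $\tau_0\in\Omega_\infty(\Sigma|E_0)$ has an effective pole at each point of $E_0$ and $E=E_0\cup\Igot(E_0)$. Thus your (a) and (b) hold from the outset, and only the zeros of $\tau$ on $K$ have to be removed. Because $\Igot$ has no fixed points, the $\Igot$-invariant divisor of $\tau|_K$ splits as $D\cdot\Igot(D)$ with disjoint supports. Then \cite[Proposition 4.1]{AlarconLopez2022APDE} gives a single $f\in\Ocal_\infty(\Sigma|E_0\cup\supp(D))$ with no zeros on $K$, effective poles on $E_0$, and $f\tau$ holomorphic and zero-free on $\supp(D)$; the paper sets $\theta=(\bar f\circ\Igot)f\tau$.

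So the paper symmetrizes \emph{multiplicatively}. $\Igot$-invariance and the exact divisor on $K$ come for free, nothing has to be approximated, and neither Hurwitz's theorem nor a nowhere-vanishing form $\theta_0$ is needed. You symmetrize \emph{additively}. That is why you need an approximation target with the right divisor on $K$ (your $\theta_0/\omega$, via \cite[Corollary 6.5]{AlarconLopez2015GT}), jet interpolation at $\supp([\omega|_K])$, and closeness to preserve that divisor. In exchange, you only need a standard Runge--Mittag-Leffler theorem rather than a divisor-prescribing one. Your Riemann--Roch genericity step in $L_\Igot(D)$ is a clean, self-contained way to secure the effective poles on $E$; properness of $L(Dq^{-1})\cap L_\Igot(D)$ follows from $L(D)=L_\Igot(D)\oplus\imath L_\Igot(D)$.

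A few slips need fixing.
\begin{enumerate}[\rm (1)]
\item With your convention $L(D)=\{f:[f]\ge D^{-1}\}$, condition (a) corresponds to $D=[\omega]\prod_{q\in E}q^N$. Your $D=[\omega]^{-1}\prod_{q\in E}q^N$ imposes $\mathrm{ord}_p f\ge\mathrm{ord}_p\omega$ off $E$, which is the wrong sign.
\item $N$ cannot be fixed before Step 3, since the pole orders of the approximant on $E$ grow with the required accuracy. Take $N$ large only after the approximant has been found.
\item The approximant does not have ``poles only in $E$''. It must have poles at the zeros of $\omega$ in $K$, and, to lie in $L(D)$, it must vanish at the poles of $\omega$ off $K\cup E$. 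The latter constraint disappears if you choose $\omega_0\in\Omega_\infty(\Sigma|E)$ from the start, as the paper does with $\tau_0$.
\end{enumerate}
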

\begin{proof}  Choose $\tau\in \Omega_{\infty,\Igot}(\Sigma|E)$
with  an effective pole at every point in $E$; for instance, we can take $\tau=\tau_0+\Igot^*(\bar \tau_0)$ for any $\tau_0\in \Omega_{\infty}(\Sigma|E_0)$ with an effective pole at every point in $E_0$ (see \cite[Claim 6.3]{AlarconLopez2022APDE}). Let $D\cdot \Igot(D)$ be the divisor of $\tau|_K$, where $\supp(D)\cap \supp(\Igot(D))=\varnothing$. By    \cite[Proposition 4.1]{AlarconLopez2022APDE} there is $f\in \Ocal_\infty(\Sigma|E_0\cup \supp(D))$ vanishing nowhere on $K$, having an effective pole at every point in $E_0$, and such that $f\tau$ is holomorphic and nowhere vanishing on $\supp(D)$. Then $(\bar f\circ \Igot) f \tau$ solves the claim.
\end{proof}

Let $\theta\in \Omega_{\infty,\Igot}(\Sigma|E)$ be a $1$-form given by Claim \ref{cl:theta} and denote by $Z\cup \Igot(Z)\subset \Sigma\setminus (K\cup E)$ the zero set of $\theta$ in $\Sigma$, where $Z\cap \Igot(Z)=\emptyset$. Set $f=(f_1,\ldots,f_n)=\Phi/\theta\in \Ocal_{\infty,\Igot}(K|E',\Sgot_*^{n-1})$.  
Choose a large integer  $r_0>r$ to be specified later. 

We claim that $f_1$ can be approximated uniformly on $K$ by a function $\tilde f_1\in  \Ocal_{\infty}(\Sigma|E\cup E'\cup Z\cup \Igot(Z))$ such that:
\begin{enumerate}[\rm ({a}1)]
\item    $\tilde f_1$ has an effective pole at every point in $E$.
\item $\tilde f_1-f_1\in \Ocal(K)$ and vanishes to order $r_0$ at every point in $E'\cup \Lambda$.
\item   $[\tilde f_1|_K]=[f_1]$.
\item $\tilde f_1 \theta\in  \Omega_{\infty}(\Sigma|E\cup E') $ and $[\tilde f_1 \theta|_{\Sigma\setminus (E\cup K)}]=D^2$ for some integer divisor $D\geq 1$ such that $\supp (D)\cap (Z\cup \Igot(Z))=\varnothing$.
\item $\supp (D)\cap \Igot(\supp(D))=\varnothing$.
\end{enumerate}
Indeed, by   \cite[Proposition 4.1]{AlarconLopez2022APDE} we can approximate $f_1$ uniformly on $K$ by  a function $g_1\in \Ocal_{\infty}(\Sigma|E_0\cup E'\cup Z\cup \Igot(Z))$ 
satisfying (a1) to (a4) with $g_1$ in place of $\tilde f_1$, but perhaps not (a5). (It suffices to apply the proposition to a suitable extension of $f_1$ that is  meromorphic  on a  neighborhood of $Z\cup\Igot(Z)$. Take also into account Hurwitz theorem for (a3).) We thus have by (a4) that $[g_1 \theta|_{\Sigma\setminus (E\cup K)}]=A^2$ for some  integer divisor $A\geq 1$ such that $\supp (A)\cap (Z\cup \Igot(Z))=\varnothing$.  Since the smoothly bounded compact domain $K$ is a strong deformation retract of $\Sigma\setminus E$, we have that $\Sigma\setminus K=C\cup\Igot(C)$, where $C$ is a disjoint union of finitely many open discs with $C\cap\Igot(C)=\varnothing$ and each component of $C$ containing a single point of $E_0\subset C$. Observe that $K\cup C$ is a Runge compact strong deformation retract of $\Sigma \setminus \Igot(E_0)$. As above,   \cite[Proposition 4.1]{AlarconLopez2022APDE} enables us to  approximate $g_1|_{K\cup C}$ uniformly on $K\cup C$ by a function $\tilde f_1\in  \Ocal_{\infty}(\Sigma|E\cup E'\cup Z\cup\Igot(Z))$ satisfying (a1) to (a4) with    $[\tilde f_1 \theta|_C]=[g_1 \theta|_C]$ and $\tilde f_1$ vanishes nowhere in $\supp(A)\cap \Igot(C)$. Observe that $\tilde f_1$ satisfies (a5) as well.
 
 Set 
 \[
 \tilde f_2=-\overline{\tilde f_1\circ \Igot}\in  \Ocal_{\infty}(\Sigma|E\cup E'\cup Z\cup \Igot(Z)).
 \]
Since $f_2=-\bar f_1\circ \Igot$, we have that $\tilde f_2$  is close to $f_2$ uniformly on $K$ and the following properties hold.
\begin{enumerate}[\rm ({b}1)]
\item    $\tilde f_2$ has an effective pole at every point in $E$.
\item $\tilde f_2-f_2\in \Ocal(K)$ and vanishes to order $r_0$ at every point in $E'\cup \Lambda$.
\item   $[\tilde f_2|_K]=[f_2]$.
\item $\tilde f_2 \theta\in  \Omega_{\infty}(\Sigma|E\cup E') $ and $[\tilde f_2 \theta|_{\Sigma\setminus (E\cup K)}]=\Igot(D)^2$.
\end{enumerate}
In particular, (a4), (a5), and (b4) show that    
\begin{equation}\label{eq:common}
\text{$\tilde f_1\theta$ and $\tilde f_2\theta$ have no common zeroes in $\Sigma\setminus (E\cup K)$.}
\end{equation}

Assuming that the approximations in (a1) and (b1) are close enough,  we claim   that each $f_j$, $3\leq j\leq n$, can be approximated uniformly on $K$ by a function $\tilde f_j\in  \Ocal_{\infty,\Igot}(\Sigma|E\cup E'\cup Z\cup \Igot(Z))$ such that:
\begin{enumerate}[\rm ({c}1)]
\item $\tilde f_j-f_j\in \Ocal(K)$ and vanishes to order $r$ at every point in $E'\cup \Lambda$ and to order $r_0$ on $ \supp([(f_1 f_2)|_K]_0)$.
\item   $[\tilde f_j|_K]=[f_j]$.
\item $\tilde f_j \theta\in  \Omega_{\infty,\Igot}(\Sigma|E\cup E') $ and $\tilde f_j \theta$ vanishes nowhere on $Z\cup \Igot(Z)$. 
\item If $n=3$ then $\tilde f_3^2= \tilde f_1 \tilde f_2$.
\item If $n\geq 4$ then $\tilde f_j \theta$ vanishes to order $r_1$ at every point in $ \supp([\tilde f_1 \tilde f_2]_0)\setminus K$ for some $r_1\in \n$ to be specified later.
\end{enumerate}
Indeed, if $n=3$ it is clear that $\tilde f_1 \tilde f_2\in \Ocal_{\infty,\Igot}(\Sigma|E\cup E'\cup Z\cup \Igot(Z))$ by the definition of $\tilde f_2$. Furthermore, taking into account that $f_3^2=f_1 f_2$ on $K$, we have by (a3), (a4), (b3), and (b4)  that 
$[(\tilde f_1 \tilde f_2)|_{\Sigma\setminus E}]=[f_3|_K]^2 D^2 \Igot (D)^2 [\theta]_0^{-2}$. Using that $K$ is a strong deformation retract of $\Sigma\setminus E$ and assuming that the approximation in (a1) and (b1) are sufficiently close, we deduce that $\tilde f_1 \tilde f_2=h^2$ for some $h\in  \Ocal_{\infty,\Igot}(\Sigma|E\cup E'\cup Z\cup \Igot(Z))$ close to $f_3$ uniformly on $K$. Observe that  $\tilde f_3=h$ satisfies conditions (c1) to (c4) for $j=n=3$; for (c1) choose $r_0>r$ sufficiently large.

If $n\geq 4$, fix $j\in \{3,\ldots,n\}$. Arguing  as above,     \cite[Proposition 4.1]{AlarconLopez2022APDE} allows to  approximate $f_j$ uniformly on $K$ by  a function $g_j\in \Ocal_{\infty}(\Sigma|E_0\cup E'\cup Z)$ satisfying (c1), (c2) and (c5), with $g_j$ in place of $\tilde f_j$, such that 
 $g_j \theta\in  \Omega_{\infty}(\Sigma|E\cup E') $ and $g_j \theta$ vanishes nowhere on $Z$. Note that  $g_j \theta$ vanishes  at every point in $\Igot(Z)$. Since 
 $f_j=\bar f_j\circ \Igot$, if the approximation of $f_j$ by $g_j$ on $K$ is close enough then $\tilde f_j=\frac12(g_j+\bar g_j\circ \Igot)\in \Ocal_{\infty,\Igot}(\Sigma|E\cup E'\cup Z\cup \Igot(Z))$
is close to $f_j$ uniformly on $K$ and satisfies conditions (c1) to (c3) and (c5); for (c2) use Hurwitz theorem.

If $n=3$ then $\Psi=(\tilde f_1,\tilde f_2,\tilde f_3)\theta\in \Omega_{\infty,\Igot}(\Sigma|E'\cup E,\Sgot_*^{2})$ solves the lemma. 
In the sequel we assume that $n\geq 4$. Set 
\begin{equation}\label{eq:Delta}
\tilde \eta_j=\tilde f_j \theta \text{ for } j=1,2,\quad \tilde \phi_j=\tilde f_j \theta \text{ for } j=3,\ldots,n, \quad \Delta=\sum_{j=3}^n \tilde \phi_j^2.
\end{equation}
By Hurwitz theorem, (a3), (a4), (b3), (b4), (c1), (c2), and (c5), we have that
\[
[\Delta|_{K}]= [(\tilde \eta_1\tilde \eta_2)|_K]
\]
and
\[
[\Delta|_{\Sigma\setminus (K\cup E)}]= [(\tilde \eta_1\tilde \eta_2)|_{\Sigma\setminus (K\cup E)}] B \Igot(B)=D^2\Igot(D)^2B \Igot(B)
\]
for some integer divisor $B\in \Div(\Sigma\setminus (K\cup E))$ with 
\begin{equation}\label{eq:soporte0}
\supp(B)\cap \supp(\Igot(B))=\varnothing,
\end{equation}
 whenever the approximation of each $f_j$ by $\tilde f_j$, $j\geq 3$, is close enough and $r_1$ is chosen large enough.  Further, by (a5), we can choose $B$ so that
\begin{equation}\label{eq:soporte1}
	\supp(B)\cap \supp(\Igot(D))=\supp(B)\cap \supp([\tilde \eta_2|_{\Sigma\setminus (K\cup E)}])=\varnothing,
\end{equation}
hence
\begin{equation}\label{eq:soporte2}
	\supp(\Igot(B))\cap \supp(D)=\supp(\Igot(B))\cap \supp([\tilde \eta_1|_{\Sigma\setminus (K\cup E)}])=\varnothing.
\end{equation}
Define 
\begin{equation}\label{eq:FDelta}
F=\frac{\Delta}{\tilde \eta_1\tilde \eta_2}\in \Ocal_{\infty,\Igot}(\Sigma|E),
\end{equation}
 and observe that $F$ is close to $1$ uniformly on $K$ with $[F|_{\Sigma\setminus E}]=B \Igot(B)$. Reasoning as in the construction of $\tilde f_1$ above, an application of \cite[Proposition 4.1]{AlarconLopez2022APDE} in two steps provides a function $h_1\in \Ocal_\infty(\Sigma|E)$ close to $1$ uniformly on $K$ with $[h_1]_0=B B_1^2$, for some integer divisor $B_1\in \Div(\Sigma\setminus (K\cup E))$ with 
 \begin{equation}\label{eq:soporte3}
 \supp (B_1)\cap \supp (\Igot(B_1))=\varnothing \quad \text{and}\quad  \supp (B_1 \Igot(B_1))\cap \supp ([\tilde \eta_1\tilde \eta_2])=\varnothing;
 \end{equation}
 cf.\ (a5). Furthermore, we choose $h_1$ agreeing with $1$ to a high order $r_2>r\in \n$ at every point in $E'\cup \Lambda$ and having an effective pole at every point of $E$. Set $h_2=\bar h_1\circ \Igot\in \Ocal_\infty(\Sigma|E)$, and note that $h_2$ is close to $1$ uniformly on $K$ with $[h_2]_0=\Igot(B)\Igot( B_1)^2$. 
Note that \eqref{eq:soporte1}, \eqref{eq:soporte2}, and \eqref{eq:soporte3} guarantee that
 \begin{equation}\label{eq:soporte4}
 	\supp([h_1]_0)\cap \supp ([\tilde \eta_2]_0)=\varnothing
	\quad\text{and}\quad 
	\supp([h_2]_0)\cap \supp ([\tilde \eta_1]_0)=\varnothing.
 \end{equation}

  It follows that $h_1 h_2/F\in \Ocal_{\infty,\Igot}(\Sigma|E)$ is close to $1$ uniformly on $K$ with $[(h_1 h_2/F)|_{\Sigma\setminus E}]=B_1^2\Igot(B_1)^2$.
Since $K$ is a strong deformation retract of $\Sigma\setminus E$, assuming that the approximations are close enough, we have that 
\begin{equation}\label{eq:H2}
\frac{h_1 h_2}{F}=H^2 \quad \text{for some $H\in  \Ocal_{\infty,\Igot}(\Sigma|E)$.}
\end{equation}
Define
 \[
 \tau_j=h_j \tilde \eta_j \text{ for } j=1,2, \quad \psi_j=H \tilde \phi_j \text{ for } j=3,\ldots,n. 
 \]
It turns out that $\Psi=(\tau_1,\tau_2,\psi_3,\ldots,\psi_n)\in \Omega_{\infty,\Igot}(\Sigma|E'\cup E,\Sgot_*^{n-1})$ solves the lemma. Indeed, observe that $\tau_1\tau_2=\sum_{j=3}^n\psi_j^2$ (see \eqref{eq:Delta}, \eqref{eq:FDelta}, and \eqref{eq:H2}), $[\tau_j|_K]=[f_j]$ for $j=1,2$ (see (a3), (b3), and \eqref{eq:Delta}, and recall that $h_j$ has no zeros on $K$), $[\psi_j|_K]=[f_j]$ for $j=3,\ldots,n$ (see (c2) and \eqref{eq:Delta}, and recall that $H$ vanishes nowhere on $K$), and $\tau_1$ and $\tau_2$ have no common zeroes in $\Sigma\setminus (K\cup E)$ (see \eqref{eq:common}, \eqref{eq:Delta}, \eqref{eq:soporte0}, and \eqref{eq:soporte4}); this shows that $\Psi$ has no zeros on $\Sigma\setminus E$ and takes its values in $\Sgot_*^{n-1}$ (take into account that $f$ vanishes nowhere on $K$). On the other hand, $\Psi$ is close to $\Phi$ uniformly on $K$ since $\tilde f_j$ is close to $f_j$, $j=1,\ldots,n$, and $h_1$, $h_2$, and $H$ are close to $1$ uniformly on $K$. Finally, $\Psi-\Phi$ is holomorphic on $K$, hence it lies in $\Omega(K)^n$, and it vanishes to the required order at every point in $E'\cup\Lambda$ by conditions (a2), (b2), (c1), and the fact that $h_1$, $h_2$, and $H$ are tangent to $1$ to a high order in $E'\cup\Lambda$, whenever all these interpolations are of sufficiently high order.
\end{proof}

\section{Proof of Theorem \ref{th:intro}}\label{sec:proof}



Let $\Sigma$, $\Igot$, $X$, $E$, $K$, $E'$, and $\Lambda$ be as in the statement of Theorem \ref{th:intro}.

We begin with the following reduction. The proof is essentially that of \cite[Theorems 4.4 and 4.5]{AlarconForstnericLopez2020Memoirs} but allowing poles of the differential and adding interpolation. This upgrade is well understood in the orientable framework (see \cite[Theorem 3.1]{AlarconLopez2022APDE}), and the proof is adapted to the non-orientable case in a straightforward way.
%
%
\begin{claim}\label{cl:sdr}
We can assume that $K$ is a strong deformation retract of $\Sigma\setminus E$ and $X$ is full (see Definition \ref{def:full-CMI}).
\end{claim}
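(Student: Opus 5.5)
The plan is to enlarge $K$ to an $\Igot$-invariant compact domain $K'\subset\Sigma\setminus E$ that is a strong deformation retract of $\Sigma\setminus E$. At the same time we replace $X$ by an $\Igot$-invariant complete full conformal minimal immersion $X'\colon K'\setminus E'\to\r^n$ of finite total curvature. We require that $X'-X$ extends harmonically to $K$, that $|X'-X|<\epsilon/2$ on $K$, that $X'-X$ vanishes to order $r$ on $E'\cup\Lambda$, and that $\Flux_{X'}=\Fgot$ on $H_1(K'\setminus E',\z)$. Granting this, the theorem for $(K',X',\epsilon/2)$ yields the theorem for $(K,X,\epsilon)$, since all conditions (i)--(iv) are transitive under such a replacement.

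\emph{Step 1: topology.} Since $K$ is Runge and $\Igot$ has no fixed points, we can choose finitely many pairwise disjoint smooth Jordan arcs $\gamma_1,\ldots,\gamma_m\subset\Sigma\setminus(E\cup\mathring K)$ with endpoints in $bK$, meeting $bK$ transversally, such that:
\begin{itemize}
\item the arcs $\Igot(\gamma_i)$ are disjoint from all the $\gamma_k$;
\item the set $S=K\cup\bigcup_i(\gamma_i\cup\Igot(\gamma_i))$ is an $\Igot$-invariant admissible set;
\item $S$ is a Runge strong deformation retract of $\Sigma\setminus E$.
\end{itemize}
The arcs are chosen one at a time. At each stage we pick an arc $\gamma$ in a component of $\Sigma\setminus(E\cup K)$ that either joins two components of $K$ or closes a new homology class. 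Because $\Igot$ acts freely, $\gamma$ can be perturbed so that $\gamma\cap\Igot(\gamma)=\varnothing$, and we then add the pair $\gamma,\Igot(\gamma)$. This choice also gives every new loop in $S$ a natural $\Igot$-paired partner.

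\emph{Step 2: extension and approximation.} Extend $X$ across each $\gamma_i$ as a smooth map whose derivative along the arc takes values in a generalized conformal minimal immersion datum, meaning a $\Agot^{n-1}_*$-valued form along the arc. The values along $\gamma_i$ are chosen so that the integrals of $\Im$ over the new loops equal the prescribed $\Fgot$, and we then define the extension on $\Igot(\gamma_i)$ by $X\circ\Igot$. Since $\Agot_*^{n-1}$ is connected and its convex hull is $\c^n$, one can prescribe any period along an arc, so the flux conditions are attainable. The compatibility $\Fgot(\Igot\circ\gamma)=-\Fgot(\gamma)$ makes the $\Igot$-symmetric definition consistent. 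Then apply the non-orientable Mergelyan theorem with jet interpolation and flux control, \cite[Theorems 4.4 and 4.5]{AlarconForstnericLopez2020Memoirs}, in its version allowing poles at the $\Igot$-invariant set $E'$ as in \cite[Theorem 3.1]{AlarconLopez2022APDE}. This gives an $\Igot$-invariant conformal minimal immersion $X'$ on a smoothly bounded $\Igot$-invariant neighborhood $K'$ of $S$, punctured at $E'$. The interpolation to order $r$ at $E'\cup\Lambda$ keeps the principal parts of $\di X$ at $E'$. Therefore $\di X'$ still has effective poles there, so $X'$ remains complete with finite total curvature and $X'-X$ is harmonic on $K$. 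Shrinking $K'$ to a thin neighborhood of $S$ keeps it a strong deformation retract of $\Sigma\setminus E$.

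\emph{Step 3: fullness.} Fullness is achieved in the same approximation step by first making the datum full on a small disc in $\mathring K\setminus(E'\cup\Lambda)$. For this we compose $f=\di X/\theta$ with flows of $\Igot$-compatible complete holomorphic vector fields tangent to $\Agot_*^{n-1}$, such as the linear fields from $\mathfrak{so}(n,\c)$, multiplied by $\Igot$-invariant functions that vanish to order $r$ on $E'\cup\Lambda$. Fullness is an open condition, so it persists under the approximations above.

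The main obstacle I expect is Step 2: keeping $\Igot$-invariance while interpolating at $E'$ with poles and simultaneously hitting the prescribed flux. The approach I would try first is the Memoirs construction of $\Igot$-invariant period-dominating sprays, applied on a base loop system of the form $\{\alpha_j\}\cup\{\Igot(\alpha_j)\}$. On such a system the period map is determined by its values on the $\alpha_j$ alone, and this reduces the problem to the orientable argument of \cite{AlarconLopez2022APDE}.
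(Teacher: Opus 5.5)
Your overall strategy is the paper's. The reduction comes from the non-orientable Mergelyan theorem with jet interpolation, flux control and fullness (\cite[Theorems 4.4 and 4.5]{AlarconForstnericLopez2020Memoirs}, upgraded as in \cite[Theorem 3.1]{AlarconLopez2022APDE} to allow poles at $E'$), followed by an enlargement of $K$. The paper, however, builds no admissible set at all. Since $K$ is already Runge, it applies the theorem directly on $K$. This gives a \emph{global} $\Igot$-invariant full immersion $\tilde X\colon\Sigma\setminus(E\cup E')\to\r^n$ with $\Flux_{\tilde X}=\Fgot$ on every closed curve. Then any $\Igot$-invariant smoothly bounded strong deformation retract $\tilde K\supset K$ of $\Sigma\setminus E$ works, e.g.\ $\Sigma$ minus small $\Igot$-symmetric discs around $E$. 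The poles are handled concretely by taking $\theta=g(\overline{g\circ\Igot})\theta_0$, with $g$ from Mittag-Leffler, so that $f=\partial X/\theta$ is holomorphic and nowhere vanishing on $K$. High-order interpolation of $f$ on $E'$ then automatically preserves the principal parts of $\partial X$. Your Steps 1--2 redo by hand what the cited theorem already provides, and they buy nothing extra.

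More importantly, Step 1 is wrong as stated. Choosing $\gamma$ so that it closes a new homology class and then adding $\Igot(\gamma)$ need not preserve the Runge property, because the loop closed by $\Igot(\gamma)$ may be homologous to $\pm$ the one closed by $\gamma$.

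For example, let $\Sigma$ be a torus with $N=(\Sigma\setminus E)/\Igot$ a once-punctured Klein bottle, and let $K=D\cup\Igot(D)$ for a small disc $D$. Let $\gamma$ be a lift of an arc in $N$ which, together with an arc of $b(K/\Igot)$, bounds a compact M\"obius band $B\subset N$. Then $\gamma$ closes up through $bD$ to a loop $\ell$. This loop is non-separating in $\Sigma\setminus E$, since $N\setminus B$ still contains a one-sided curve, so $\ell$ is homologically new and $\gamma$ satisfies all your requirements. But $\ell$ and $\Igot(\ell)$ bound the annulus $A$ covering $B$. Hence $\mathring A$ is a relatively compact component of $(\Sigma\setminus E)\setminus S$, no later arcs can repair this, and the Mergelyan theorem cannot be applied on $S$.

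The cure is either to choose the arcs in the quotient $N$ relative to $K/\Igot$ and lift them, or simply to drop Step 1 as the paper does. A smaller point concerns Step 3. Deforming $f$ by flows with nonconstant functions on $K$ changes the periods over $H_1(K\setminus E',\z)$. Exactness and the flux $\Fgot$ on $K$ must therefore be restored (e.g.\ with a period-dominating spray) before the modified datum is valid input for the approximation theorem.
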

\begin{proof}[Sketch of proof]
Let $\theta$ be a nowhere vanishing $\Igot$-invariant holomorphic $1$-form on $\Sigma\setminus(E\cup E')$ that is meromorphic on $\Sigma\setminus E$ (that is, $\theta\in \Omega_{\infty,\Igot}(\Sigma\setminus E|E')$) such that $f:=\partial X/\theta$ is holomorphic and nowhere vanishing on $K$, hence $f\in\Ocal_\Igot(K,\Agot_*^{n-1
})$. We can for instance choose $\theta$ of the form $\theta=g(\overline{g\circ\Igot})\theta_0$, where $\theta_0\in\Omega_\Igot(\Sigma\setminus E)$ is nowhere vanishing (see \cite[Theorem 6.4]{AlarconLopez2015GT}), $g\in\Ocal_\infty(\Sigma\setminus E|E_0')$ is nowhere vanishing and $\partial X/g$ is holomorphic and nowhere vanishing on $E_0'$, where $E'=E_0'\cup\Igot(E_0')$ with $E_0'\cap\Igot(E_0')=\varnothing$. Such a $g$ is provided by the classical Mittag-Leffler theorem. Note that $\theta$ need not extend meromorphically to the points in $E$.

Combining the arguments in the proofs of \cite[Theorems 4.4 and 4.5]{AlarconForstnericLopez2020Memoirs} and \cite[Theorem 3.1]{AlarconLopez2022APDE}, we can approximate $f$ uniformly on $K$ by a map $\tilde f\in\Ocal_\Igot(\Sigma\setminus E,\Agot_*^{n-1})$ such that $\tilde f-f$ vanishes to a high order on $E'\cup \Lambda$ and $\tilde f\theta$ integrates to an $\Igot$-invariant full conformal minimal immersion $\tilde X:\Sigma\setminus(E\cup E')\to\r^n$ such that $\tilde X-X$ vanishes to a high order on $E'\cup\Lambda$ and $\Flux_{\tilde X}(\gamma)=\Fgot(\gamma)$ for every closed curve $\gamma\subset \Sigma\setminus(E\cup E')$. Note that $\tilde X$ need not be complete or of finite total curvature. Replacing $(X,K)$ with $(\tilde X,\tilde K)$ for any $\Igot$-invariant smoothly bounded compact domain $\tilde K\subset \Sigma\setminus E$ that contains $K$ and is a strong deformation retract of $\Sigma\setminus E$, proves the claim.
\end{proof}

In the sequel, we assume that $K$ is a strong deformation retract of $\Sigma\setminus E$ and $X$ is full. Also let $\epsilon$ and $r$ be as in the statement. Let us find an immersion $Y$ satisfying the conclusion of the theorem. Choose $\theta\in \Omega_{\infty,\Igot}(\Sigma|E)$ vanishing nowhere on $K$, given by Claim \ref{cl:theta}, call
\begin{equation}\label{eq:Phi}
\Phi=\Xi\circ \partial X \in \Omega_{\infty,\Igot}(K|E',\Sgot_*^{n-1})
\end{equation}
 (see \eqref{eq:Thetabiho}) and set 
\begin{equation}\label{eq:u}
u=(u_1,\ldots,u_n)=\Phi/\theta\in  \Ocal_{\infty,\Igot}(K|E',\Sgot_*^{n-1}).
\end{equation}

\noindent{\bf Step 1: The homology basis and the period map.}
Since the $2$-sheeted covering $\Sigma\to \Sigma/\Igot$ is unique up to isomorphisms, there are smoothly bounded compact domains  $\Sigma^+,\Sigma^-=\Igot(\Sigma^+)$ in $\Sigma$ with $\Sigma=\Sigma^+\cup \Sigma^-$ and $\mathring \Sigma^+\cap \mathring \Sigma^-=\varnothing$. Furthermore, there is a such decomposition with $ \Sigma^+\cap  \Sigma^-=b\Sigma^+=  b\Sigma^-\subset \mathring K\setminus (\Lambda\cup E')$ being either a smooth Jordan curve (when the genus of $\Sigma$ is even)  or a disjoint union of two smooth Jordan curves (when the genus of $\Sigma$ is odd).  Moreover, $\Sigma\setminus K$ is a disjoint union of smoothly bounded closed discs, each containing a single point in $E$.
 See \cite[Section 2.3]{AlarconForstnericLopez2020Memoirs} for more details. Write $K^\pm=K\cap \Sigma^\pm$, $E^\pm=E\cap \Sigma^\pm$, $(E')^\pm=E'\cap \Sigma^\pm$, and $\Lambda^\pm=\Lambda\cap \Sigma^\pm$.

Up to enlarging the finite set $\Lambda$, we may assume that 
 \begin{equation}\label{eq:u1un}
   (u_1 u_2)(p)\neq 0 \quad \text{for all } \; p\in K\setminus (\Lambda \cup E').
 \end{equation}
According to \cite[Section 2.3]{AlarconForstnericLopez2020Memoirs}, there is a collection of smoothly embedded  oriented closed curves  $C_0\subset \mathring K\setminus (\Lambda\cup E')$ and $C_1^+,\ldots, C_l^+$ in $\mathring K^+\setminus (\Lambda^+\cup (E')^+)$ satisfying the following conditions.
\begin{itemize}
\item $\Igot(C_0)=C_0$ as oriented curves, hence $\Igot_*(C_0)=C_0$ homologically in $\Sigma\setminus E$.
\item  $\Bcal_0=\{C_0,C_1^+,\ldots, C_l^+, C_1^-= \Igot_*(C_1^+),\ldots,C_l^-= \Igot_*(C_l^+)\}$ is a homology basis in $\Sigma\setminus E$.
\item $b\Sigma^+=C_0$   if  the genus of $\Sigma$ is even while $\mathring K^\pm\cap C_0\neq \varnothing$  if it is odd.
\item $C=C_0\cup \bigcup_{j=1}^l (C_j^+\cup  C_j^-)$ is Runge in $\Sigma\setminus E$ (and disjoint from $\Lambda \cup E'$).
\item Any two different curves in $\Bcal_0$ intersect in at most at a single point.
\end{itemize}
(See \cite[Figures 2.1 and 2.2]{AlarconForstnericLopez2020Memoirs}.)
Write $\Lambda^+\cup (E')^+=\{q_1,\ldots,q_k\}$. 
Choosing the curves in $\Bcal_0$  in the right way, we can take a point 
\begin{equation}\label{eq:p0+}
p_0^+\in \mathring K^+\setminus (C\cup \Lambda\cup E')
\end{equation}
 and a collection of smoothly embedded oriented  arcs $\gamma_j^+\subset \mathring K^+\setminus C$, $j=1\ldots,k$, satisfying the following conditions:
\begin{itemize}
\item $\gamma_j^+$ has $p_0^+$ as initial point and $q_j$ as final point for every $j=1\ldots,k$.
\item $\gamma_j^+\cap \gamma_i^+=\{p_0^+\}$  for every  $i,j\in \{1\ldots,k\}$, $i\neq j$.
 \item Setting $\gamma_j^-=\Igot(\gamma_j^+)$ for all $j\in \{1\ldots,k\}$ and  $C'=\bigcup_{j=1}^k (\gamma_j^+\cup\gamma_j^-)$, we have that 
 \begin{equation}\label{eq:CC'}
 \text{$C'\cup C$ is Runge in $\Sigma\setminus E$.}
 \end{equation}
\end{itemize}
Set $\Bcal =\{C_0,C_1^+,\ldots, C_l^+, \gamma_1^+,\ldots, \gamma_k^+\}$ and write $\Bcal=\{\Gamma_0=C_0,\Gamma_1,\ldots,\Gamma_{\ell}\}$, where $\ell=l+k$. Call 
\begin{equation}\label{eq:Gamma}
\Gamma=\bigcup_{j=0}^\ell \Gamma_j.
\end{equation}
 \begin{definition}\label{de:Cu}
Denote by $\Ccal^0(\Gamma,u)$ the set of continuous functions $H=(H_1,\ldots,H_n)\in \Ccal^0(\Gamma\setminus (E')^+,\Sgot_*^{n-1})$ such that $H-u\in \Ccal^0(\Gamma)$,   $H_2|_{\Gamma_0}=-\overline H_1\circ \Igot|_{\Gamma_0}$, and $H_i|_{\Gamma_0}=\overline H_i\circ\Igot|_{\Gamma_0}$ for $3\leq i\leq n$.
\end{definition}
Define 
\begin{equation}\label{eq:Pcal}
\Pcal=(\Pcal_0,\Pcal_1,\ldots,\Pcal_\ell): \Ccal^0(\Gamma,u)\to \r^n\times (\c^n)^\ell
\end{equation}
 given by
\begin{multline*}
\Pcal_0(H)=\left( \Re \int_{\Gamma_0} (H_1-u_1)\theta , \Im \int_{\Gamma_0}(H_1-u_1) \theta,\right.\\ \int_{\Gamma_0}(H_3-u_3)\theta,\ldots, \left.\int_{\Gamma_0}(H_n-u_n)\theta\right)\in \r^n,
\end{multline*}
\[
\Pcal_j(H)=\left( \int_{\Gamma_j} (H_i-u_i)\theta \right)_{i=1,\ldots,n}\in \c^n, \quad 1\leq j\leq \ell.
\]
Note that $\Pcal_0$ takes its values in $\R^n$ by the $\Igot$-invariances of $\Gamma_0$ and $(H_j,u_j)$ for $j\ge 3$.
The main task to complete the proof is to find a map $H\in \Ocal_\infty(\Sigma|E\cup E'\cup \supp([\theta]))^n$ satisfying the following main conditions:
\begin{enumerate}[{\rm ({P}1)}]
\item $H \theta \in \Omega_{\infty,\Igot}(\Sigma|E\cup E',\Sgot_*^{n-1})$ and it has an effective pole at every point in $E$.
\item $H \theta-\Phi\in \Omega(K)^n$  is close to $0$ on $K$ and vanishes to a given order in $\Lambda\cup E'$.
\item  $H\in \Ccal^0(\Gamma,u)$ and  $\Pcal(H)=0$.
\end{enumerate}
This will be done in several stages (Steps 2 to 5 in what follows).

\noindent{\bf Step 2:  Construction of the local spray.} We shall first construct a $\Pcal$-dominating spray $\r^N\times (\Gamma\setminus (E')^+)\to   \c^n$  of maps in $\Ccal^0(\Gamma,u)$ with the core $u$.
 To begin with, choose $\alpha\in \c$ with 
 \begin{equation}\label{eq:alpha}
 \Re(\alpha)\Im(\alpha)\neq 0.
 \end{equation}
  Set $N= 2n(\ell+1)$, and assume that we are given a family of continuous functions $f=(f_{i,j}):\Gamma \to \c^{N}$, $1\leq i\leq 2n$, $0\leq j\leq \ell$, such that $f|_{\Gamma_0}=\bar f \circ\Igot|_{\Gamma_0}$  and 
  \begin{equation}\label{eq:suppf}
  \supp(f)\cap (\Lambda^+\cup (E')^+\cup\{p_0^+\})=\varnothing.
  \end{equation}  
   Define the spray of maps 
  \begin{equation}\label{eq:Hf}
  H_{f,u}:\r^N\times  (\Gamma\setminus (E')^+)\to \c^n
  \end{equation}
   by 
 \begin{multline*}
 H_{f,u}(\xi,\cdot)=\big((1+\alpha \xi\cdot f)^2u_1,(1+\bar \alpha \xi\cdot f)^2u_2, \\(1+\alpha \xi\cdot f)(1+\bar \alpha \xi\cdot f) u_3,
 \ldots,(1+\alpha \xi\cdot f)(1+\bar \alpha \xi\cdot f) u_n\big),
 \end{multline*}
 for $\xi=(\xi_{i,j})\in \r^N$, $1\leq i\leq 2n$, $0\leq j\leq \ell$, where $ \xi\cdot f=\sum_{i=1}^{2 n}\sum_{j=0}^\ell \xi_{i,j} f_{i,j}:\Gamma \to \c$. 
 In view of \eqref{eq:u1un},  \eqref{eq:alpha}, \eqref{eq:u} and \eqref{eq:suppf}, we have that $H_{f,u}(\xi,\cdot)\in  \Ccal^0(\Gamma,u)$ for every $\xi\in \r^N$; see Definition \ref{de:Cu}. Hence, we can consider the period map
 \[
 \Qcal_{f,u}=(\Qcal_0,\Qcal_1,\ldots,\Qcal_\ell):\r^N\to \r^n\times (\c^n)^\ell, \quad \Qcal_{f,u}(\xi)= \Pcal\big(H_{f,u}(\xi,\cdot)\big);
 \]
 see \eqref{eq:Pcal};  a vector valued quadratic polynomial.
  Suppose in addition that 
  \begin{equation}\label{eq:supf2}
\text{$\supp(f_{i,j})\subset \Gamma_j\setminus \bigcup_{k\neq j} \Gamma_k$ \quad  for every $i$ and $j$,}
\end{equation}
hence
\[
  \frac{\partial \Qcal_k}{\partial \xi_{i,j}}\Big|_{\xi=0}=0,\quad k, j\in \{0,\ldots,\ell\},\; k\neq j.
  \]
Taking into account that $u_2=-\bar u_1\circ\Igot$ (see \eqref{eq:u}) and  $f|_{\Gamma_0}=\bar f \circ\Igot|_{\Gamma_0}$, it follows for each $i=1,\ldots,2n$  that
   \begin{multline}\label{eq:JQ0}
   \frac{\partial \Qcal_0}{\partial \xi_{i,0}}\Big|_{\xi=0}=2\left(  \Re\int_{\Gamma_0} \alpha f_{i,0} u_1\theta,  \Im\int_{\Gamma_0} \alpha f_{i,0} u_1\theta,\right.\\
   \left.
  \Re(\alpha) \int_{\Gamma_0}   f_{i,0} u_3\theta,\ldots, \Re(\alpha)\int_{\Gamma_0}   f_{i,0} u_n\theta \right)\in \r^n.
  \end{multline}
Likewise, for $1\leq j\leq \ell$ we have that
  \begin{multline}\label{eq:JQj}
   \frac{\partial \Qcal_j}{\partial \xi_{i,j}}\Big|_{\xi=0}=2\left( \alpha \int_{\Gamma_j}  f_{i,j} u_1\theta, \bar\alpha\int_{\Gamma_j}   f_{i,j} u_2\theta,\right.\\
   \left.
    \Re(\alpha)\int_{\Gamma_j}  f_{i,j} u_3\theta,\ldots,  \Re(\alpha)\int_{\Gamma_j} f_{i,j} u_n\theta \right)\in \c^n.
  \end{multline}
Writing $\Qcal_j=(\Re \Qcal_j,\Im \Qcal_j):\r^N\to \r^2$ for $1\leq j\leq \ell$, it turns out that the Jacobian matrix of $\Qcal_{f,u}:\r^N\to \r^{2n\ell+ n}=\r^{N-n}$ at $\xi=0$ has the following block structure
\[
{\rm Jac}_0(\Qcal_{f,u})=\left(\begin{array}{c|c|c|c}
(W_0)_{2n\times n} & 0_{2n\times 2n} & \cdots  & 0_{2n\times 2n} \\
\hline
0_{2n\times n} & (W_1)_{2n\times 2n}  & \cdots  & 0_{2n\times 2n} \\
\hline 
\vdots & \vdots & \ddots  & \vdots\\
\hline 
0_{2n\times n} & 0_{2n\times 2n} &\cdots & (W_\ell)_{2n\times 2n} 
\end{array}
 \right)_{2n(\ell+1)\times (n+2n \ell)},
\]
 where
 \[
 W_0= \left(\frac{\partial \Qcal_0}{\partial \xi_{i,0}}\Big|_{\xi=0}\right)_{1\leq i\leq 2n}
 \]
 and
 \[
 W_j= \left(\left(\frac{\partial \Re \Qcal_j}{\partial \xi_{i,j}}\Big|_{\xi=0}\right)_{1\leq i\leq 2n} \Big| \left(\frac{\partial \Im\Qcal_j}{\partial \xi_{i,j}}\Big|_{\xi=0}\right)_{1\leq i\leq 2n}\right),\;\; 1\leq j\leq \ell.
 \] 
 Since $u$ is full,  for every $j=0,\ldots,\ell$ there are points $\{p_{i,j}\colon i=1,\ldots,n\}$ in $\Gamma_j\setminus (\Lambda^+\cup (E')^+\cup\{p_0^+\}\cup \bigcup_{k\neq j} \Gamma_k)$ such that 
 \begin{equation}\label{eq:span}
 \span_\c \{u(p_{1,j}),\ldots,u(p_{n,j})\} =\c^n.
 \end{equation}
 Further, make sure that $\Igot(\{p_{i,0}\colon i=1,\ldots,n\})\cap\{p_{i,0}\colon i=1,\ldots,n\}=\varnothing$. Choose  functions $f_{i,j}$ as above such that the following holds for $i=1\ldots,n$ and $j=0,\ldots,\ell$:
 \begin{equation}\label{eq:JQ01}
\text{$\int_{\Gamma_0} f_{2i-1,0} u \theta \approx u(p_{i,0})+u(\Igot(p_{i,0}))$ and $\int_{\Gamma_0} f_{2i,0} u \theta \approx -\imath u(p_{i,0})+\imath u(\Igot(p_{i,0}))$,}
\end{equation}
and 
 \begin{equation}\label{eq:JQj1}
 \text{$\int_{\Gamma_j} f_{2i-1,j} u \theta \approx u(p_{i,j})$ and $\int_{\Gamma_j} f_{2i,j} u \theta \approx -\imath u(p_{i,j})$.}
 \end{equation}
 For instance, we can take each $f_{i,0}$ as a suitable function satisfying  $f_{i,0}|_{\Gamma_0}=\overline f_{i,0}\circ \Igot|_{\Gamma_0}$  with the support in a small neighborhood of $\{p_{i,0},\Igot(p_{i,0})\}$, and each $f_{i,j}$,  $1\leq j\leq \ell$, with the support in a small neighborhood of $p_{i,j}$. 
  
 We claim that ${\rm Jac}_0(\Qcal_{f,u})$ has maximal rank equal to $n+2n \ell$. Indeed, let us first see that $W_0$ has rank equal to $n$. Since $u\in  \Ocal_{\infty,\Igot}(K|E',\Sgot_*^{n-1})$, for $1\leq i\leq n$  we infer from \eqref{eq:JQ0} and \eqref{eq:JQ01} that 
%
 \[
  \left(\frac{\partial \Qcal_0}{\partial \xi_{2i-1,0}}+\imath   \frac{\partial \Qcal_0}{\partial \xi_{2i,0}}\right)\Big|_{\xi=0}\approx 2 \Big(  \alpha u_1- \bar \alpha u_2  , -\imath (\alpha u_1+\bar \alpha u_2  ),
  2\Re(\alpha) u_3 ,\ldots,  2\Re(\alpha) u_n \Big) (p_{i,0}).
 \]
 Since $\Re(\alpha)\neq 0$ \eqref{eq:alpha}, equation \eqref{eq:span} ensures that the columns of $W_0$ are linearly independent whenever the above approximations are close enough, hence $W_0$ has rank equal to $n$. It remains to check that the rank of $W_j$ equals $2n$ for every $j=1,\ldots,\ell$. For each such $j$, we have in view of \eqref{eq:JQj} and \eqref{eq:JQj1} that
 \[
   \frac{\partial \Qcal_j}{\partial \xi_{2i-1,j}}\Big|_{\xi=0}\approx \imath  \frac{\partial \Qcal_j}{\partial \xi_{2i,j}}\Big|_{\xi=0}\approx 2\left( \alpha   u_1, \bar\alpha u_2,
    \Re(\alpha) u_3 ,\ldots,  \Re(\alpha) u_n  \right)(p_{i,j})\in \c^n.
  \]
  Therefore $W_j=2(\Re W_j^*|\Im W_j^*)$ where 
  \[
  W_j^*\approx \left(\begin{array}{ccccc}
 \alpha   u_1   (p_{1,j}) & \bar\alpha u_2   (p_{1,j}) &  \Re(\alpha) u_3   (p_{1,j}) & \cdots &  \Re(\alpha) u_n(p_{1,j})\\
  -\imath \alpha   u_1   (p_{1,j}) & -\imath \bar\alpha u_2   (p_{1,j}) &  -\imath \Re(\alpha)  u_3   (p_{1,j}) & \cdots &-\imath  \Re(\alpha) u_n(p_{1,j})\\
  \vdots & \vdots & \vdots & \vdots & \vdots \\
  \alpha   u_1   (p_{n,j}) & \bar\alpha u_2   (p_{n,j}) &  \Re(\alpha) u_3   (p_{n,j}) & \cdots &  \Re(\alpha) u_n(p_{n,j})\\
  -\imath \alpha   u_1   (p_{n,j}) & -\imath \bar\alpha u_2   (p_{n,j}) &  -\imath \Re(\alpha)  u_3   (p_{n,j}) & \cdots &-\imath  \Re(\alpha) u_n(p_{n,j})
  \end{array}\right)_{2n\times n}.
  \]
As above, since $\Re(\alpha)\neq 0$ \eqref{eq:alpha}, equation \eqref{eq:span} ensures that  the columns of $W_j$ are linearly independent whenever the above approximations are close enough, hence $W_j$ has rank equal to $2n$. This shows that ${\rm Jac}_0(\Qcal_{f,u})$ has  rank equal to $n+2n \ell$ as claimed.   In particular there is  a closed ball $\Delta\subset \r^N$ with $0\in \mathring \Delta$   such that the quadratic map
\begin{equation}\label{eq:Lambdaf}
\text{$\Qcal_{f,u}:\Delta\to \r^{n+2n \ell}$  is submersive   and  $0=\Qcal_{f,u}(0)\in\Qcal_{f,u}(\mathring \Delta) \subset \r^{n+2n \ell}$.}
\end{equation}
In other words, the spray $H_{f,u}:\r^N\times (\Gamma\setminus (E')^+)\to   \c^n$ of maps in $\Ccal^0(\Gamma,u)$ with the core $H_{f,u}(0,\cdot)=u|_{\Gamma\setminus (E')^+}$
is period dominating for the period map $\Pcal$. 

\noindent{\bf Step 3: First approximation of the  spray.}
 Next, we shall approximate the spray $H_{f,u}$ by a suitable $\Pcal$-dominating spray $\r^N\times (K\setminus E')\to  \c^n$ of maps in $\Ccal^0(\Gamma,u)\cap \Ocal_{\infty,\Igot}(K|E',  \Sgot_*^{n-1})$ with the core $u$. 
Call 
\begin{equation}\label{eq:Gamma^*}
\Gamma^*=\bigcup_{j=1}^\ell \Igot(\Gamma_j)=\big(\bigcup_{j=1}^l C_j^-\big)\cup \big(\bigcup_{j=1}^k \gamma_j^-\big)\subset \Sigma^-;
\end{equation}
 cf.\ \eqref{eq:Gamma} and note that $\Gamma\cap \Gamma^*=\varnothing$. Further, by \eqref{eq:CC'} we have that $\Gamma\cup \Gamma^*=C\cup C'$ is Runge in $\Sigma\setminus E$. Consider the continuous map $f^*:\Gamma\to \c^N$ given by $f^*=f$ on $\Gamma\setminus \Gamma_0$ and $f^*=f/2$ on $\Gamma_0$; see \eqref{eq:supf2}. Recall that  $\bar f\circ \Igot =f$ on $\Gamma_0$, hence    $f=f^*+\bar f^*\circ \Igot$ on $\Gamma_0$.  By the algebraic Runge theorem in     \cite[Theorem 10]{Royden1967JAM} we can approximate the map $f^*:\Gamma \to \c^N$ uniformly on $\Gamma$ by a meromorphic function $g=(g_{i,j})\in\Ocal_\infty(\Sigma|E)^N$ ($1\leq i\leq 2n$, $0\leq j\leq \ell$) that is uniformly close to $0$ in $\Gamma^*$ and vanishes to any given order on $\Lambda\cup E'$; take into account \eqref{eq:suppf} for the latter.
     Set 
   \[
   \hat g=g+\bar g\circ \Igot=(\hat g_{i,j}):\Sigma\setminus E \to \c^N
   \]
and note that: 
   \begin{enumerate}[{\rm ({A}1)}]
   \item     $\hat g\in \Ocal_{\infty,\Igot}(\Sigma|E)^N$.
   \item    $\hat g$ is uniformly close to $f$ in $\Gamma$.
   \item  $\hat g$ vanishes to a given order on $\Lambda\cup E'$. 
   \end{enumerate}
Define the spray of maps $H_{\hat g,u}:\r^N\times  (K\setminus E')\to \c^n$ with core  $H_{\hat g,u}(0,\cdot)=u$ as the map $H_{f,u}$ \eqref{eq:Hf} with $\hat g$ in the place of $f$:
 \begin{multline}\label{eq:Hg}
  H_{\hat g,u}(\xi,\cdot)=\big((1+\alpha \xi\cdot {\hat g})^2 u_1,(1+\bar \alpha \xi\cdot {\hat g})^2 u_2, \\(1+\alpha \xi\cdot {\hat g})(1+\bar \alpha \xi\cdot {\hat g})  u_3,
 \ldots,(1+\alpha \xi\cdot {\hat g})(1+\bar \alpha \xi\cdot {\hat g})  u_n\big).
 \end{multline} 
Assuming that the zero order of $\hat g$ at every point in $E'\cup \Lambda$ is high enough, see (A3), it is clear that 
  \begin{equation}\label{eq:A3}
 \text{$H_{\hat g,u}(\xi,\cdot)-u\in \Ocal(K)^n$ and vanishes to a given order in $E'\cup \Lambda$.}
 \end{equation}
Moreover,
   \begin{equation}\label{eq:A30}
 \text{$H_{\hat g,u}(\xi,\cdot)-u$ converges uniformly  on $K$ to $0$ as $\xi$ converges to $0$},
 \end{equation}
 that is, $\lim_{\xi\to 0}\|H_{\hat g,u}(\xi,\cdot)-u\|_K=0$. 
 
 We claim that $H_{\hat g,u}(\xi,\cdot)$ vanishes nowhere. Indeed, since $\Im(\alpha)\neq 0$ \eqref{eq:alpha} we have that
 $1+\alpha \xi\cdot {\hat g}$ and $1+\bar \alpha \xi\cdot {\hat g} $ have no common zeroes, and so  $H_{\hat g,u}(\xi,\cdot)$ vanishes nowhere on $K\setminus (E'\cup \Lambda)$;  see \eqref{eq:u1un}. On the other hand, $\hat g(p)=0$ and so $H_{\hat g,u}(\xi,p)=u(p)\neq 0$ for every $p\in \Lambda$ (see \eqref{eq:u}). Therefore, $H_{\hat g,u}(\xi,\cdot):  K\setminus E'\to \c^n$   takes its values in $\Sgot_*^{n-1}$, and  by \eqref{eq:A3}
 we also have that  $H_{\hat g,u}(\xi,\cdot)\in \Ccal^0(\Gamma,u)$ for every $\xi \in \r^N$. 
If the approximation of $f$ by ${\hat g}$ on $\Gamma$ is close enough,  see (A2),  the spray 
\begin{equation}\label{eq:Hgatgu}
\text{$ H_{\hat g,u}:\r^N\times (K\setminus E')\to   \Sgot_*^{n-1}$ 
 is period dominating for the period map $\Pcal$,}
 \end{equation}
 see \eqref{eq:Pcal} and recall that $H_{f,u}$ is $\Pcal$-dominating.  Furthermore,  by \eqref{eq:Lambdaf}, we can assume that  the corresponding  vector valued quadratic polynomial   period map
  \begin{equation}\label{eq:Qg}
 \Qcal_{\hat g,u}:\r^N\to \r^n\times (\c^n)^\ell, \quad \Qcal_{\hat g,u}(\xi)=  \Pcal\big( H_{\hat g,u}(\xi,\cdot)\big),
 \end{equation}
satisfies that
 \begin{equation}\label{eq:Lambdahatg}
\text{$\Qcal_{\hat g,u}:\Delta \to \r^{n+2n \ell}$  is   submersive   and  $0=\Qcal_{\hat g,u}(0)\in\Qcal_{\hat g,u}(\mathring \Delta) \subset \r^{n+2n \ell}$.}
\end{equation}
By shrinking the ball $\Delta$ if necessary,   due to  \eqref{eq:A30}, we can also  assume that 
\begin{equation}\label{eq:H-u}
\text {$|H_{\hat g,u}(\xi,\cdot)-u|(p)<\epsilon'$ for every $p\in K$ and $\xi \in \Delta$,}
\end{equation}
 where $\epsilon'>0$ is a given number that will be specified later.

%

 \noindent{\bf Step 4: Approximation of the core.}
 Now, we shall approximate the spray $H_{\hat g,u}$ with the core $u$ by another   $\Pcal$-dominating spray $\r^N\times (K\setminus E')\to  \c^n$ of maps in $\Ccal^0(\Gamma,u)\cap \Ocal_{\infty,\Igot}(K|E',  \Sgot_*^{n-1})$ whose core lies in $\Ocal_{\infty,\Igot}(\Sigma|E'\cup E\cup \supp([\theta]),\Sgot_*^{n-1})$. 
 
 Recall that  $\Phi\in \Omega_{\infty,\Igot}(K|E',\Sgot_*^{n-1})$ is full; see  \eqref{eq:Phi}. By Lemma \ref{lem:core}, there exists a full meromorphic $1$-form  $\hat \Phi\in \Omega_{\infty,\Igot}(\Sigma|E'\cup E,\Sgot_*^{n-1})$ such that:
    \begin{enumerate}[{\rm ({B}1)}]
   \item  $\hat \Phi-\Phi\in \Omega(K)$ and is uniformly close to $0$ on $K$.
   \item $\hat \Phi$ has an effective pole at every point in $E$.
   \item $\hat \Phi-\Phi$   vanishes to a given finite order   at every point in $E'\cup \Lambda$.
   \end{enumerate}
 Observe that  $H=\hat \Phi/\theta$ meets conditions  {\rm (P1)} and  {\rm (P2)} stated at the end of Step 1, hence $H\in \Ccal^0(\Gamma,u)$, but it  need not satisfy $\Pcal(H)=0$ in {\rm (P3)}. In the remainder of the proof we shall slightly modify $\hat \Phi$ to grant this condition.
 Recall that   $\theta\in \Omega_{\infty,\Igot}(\Sigma|E)$ vanishes nowhere on $K$, set 
 \begin{equation}\label{eq:hatu}
 \hat u=(\hat u_1, \hat u_2,\ldots,\hat u_n)=(\hat \Phi/\theta)\vert_{K\setminus E'}\in \Ocal_{\infty,\Igot}(K|E',\Sgot_*^{n-1}),
 \end{equation}
 and observe that $\hat u\in \Ocal_{\infty,\Igot}(\Sigma|E'\cup E\cup \supp([\theta]),\Sgot_*^{n-1})$.
     Recall also  that $\hat \Phi=(\hat \Phi_1,\hat \Phi_2,\ldots,\hat \Phi_n)$ is full,  and consider the finite set 
 \begin{equation}\label{eq:T}
 T=T^+\cup T^-=\{p\in \Sigma\setminus (E\cup E')\colon (\hat \Phi_1 \hat \Phi_2 )(p)=0\};
 \end{equation}
 here $T^\pm=T\cap \Sigma^\pm$  and $\Igot(T)=T$.   By Hurwitz theorem and \eqref{eq:u1un}, and assuming that the vanishing order in (B3)  is high enough, we can guarantee that
    \begin{equation}\label{eq:TT0}
T\cap K=\{p\in K\setminus E'\colon (u_1 u_2)(p)=0\}\subset \Lambda.
\end{equation}
 Define the spray of maps $H_{\hat g,\hat u}:\r^N\times  (K\setminus E')\to \c^n$ as the map $H_{\hat g,u}$ \eqref{eq:Hg} with $\hat u$ in the place of $u$:
 \begin{multline}\label{eq:hatHg}
 H_{\hat g,\hat u}(\xi,\cdot)=\big((1+\alpha \xi\cdot {\hat g})^2\hat u_1,(1+\bar \alpha \xi\cdot {\hat g})^2 \hat u_2, \\(1+\alpha \xi\cdot {\hat g})(1+\bar \alpha \xi\cdot {\hat g}) \hat u_3,
 \ldots,(1+\alpha \xi\cdot {\hat g})(1+\bar \alpha \xi\cdot {\hat g}) \hat u_n\big),
 \end{multline} 
 and, reasoning as in the previous step, observe that:
     \begin{enumerate}[{\rm ({C}1)}]
   \item $H_{\hat g,\hat u}(\xi,\cdot)-H_{\hat g,u}(\xi,\cdot)\in \Ocal(K)^n$ for all $\xi\in \r^N$; use (B3).
   \item $H_{\hat g,\hat u}-H_{\hat g,u}:\r^N\times  K\to \c^n$ converges   uniformly on compact subsets of $\r^N\times K$  to  $0$ as $\hat u-u$ converges uniformly on $K$  to $0$.
   \item $ H_{\hat g,\hat u}(\xi,\cdot):K\setminus E'\to \c^n$ vanishes nowhere  for every $\xi\in \r^N$; recall that $\Im(\alpha)\neq 0$ \eqref{eq:alpha} and take into account (A3),  (B3), \eqref{eq:TT0}, and \eqref{eq:u}.
   \item $H_{\hat g,\hat u}(\xi,\cdot)\in \Ccal^0(\Gamma, u)$ for all $\xi\in \r^N$; see  Definition \ref{de:Cu} and (C1), and recall that $H_{\hat g,u}(\xi,\cdot)\in \Ccal^0(\Gamma,u)$. 
      \end{enumerate}
By (C3) and (C4)  we can consider the  well defined period map
  \begin{equation}\label{eq:hatQ}
 \Qcal_{\hat g,\hat u}:\r^N\to \r^n\times (\c^n)^\ell, \quad \Qcal_{\hat g,\hat u}(\xi)=  \Pcal\big(H_{\hat g,\hat u}(\xi,\cdot)\big).
 \end{equation}
We have that:
  \begin{enumerate}[{\rm ({E}1)}]
  \item  $ \Qcal_{\hat g,\hat u}$ converges uniformly on compact subsets of $\r^N$ to $ \Qcal_{\hat g, u}$ as $\hat u-u$ converges uniformly on $K$ to $0$; see (C2),   \eqref{eq:hatQ}, and  \eqref{eq:Qg}.
\item $\Qcal_{\hat g,\hat u}:\Delta \to \r^{n+2n \ell}$  is  submersive  and  $0\in\Qcal_{\hat g,\hat u}(\mathring \Delta) \subset \r^{n+2n \ell}$; see  \eqref{eq:Lambdahatg} and take into account (E1).
    \item  $|H_{\hat g,\hat u}(\xi,\cdot)-u|(p)<\epsilon'$ for every $p\in K$ and $\xi \in \Delta$; use (C2)  and \eqref{eq:H-u}.
    \end{enumerate}

In particular, there is a point  $\xi^*\in \mathring \Delta$ such that  $\Qcal_{\hat g,\hat u}(\xi^*)=0$. We have that  the meromorphic extension $H$ of $H_{\hat g,\hat u}(\xi^*,\cdot)$ to $\Sigma$ satisfies the main conditions {\rm (P2)} and {\rm (P3)}. If the $1$-form $H\theta$, which  lies in $\Omega_{\infty}(\Sigma|E\cup E')^n$, had no zeros, then $H\theta\in \Omega_{\infty,\Igot}(\Sigma|E\cup E',\Sgot_*^{n-1})$  and $H$ would also satisfy condition {\rm (P1)}. We emphasize that this need not be the case, so we have to keep working.  
  
\noindent{\bf Step 5: Second approximation of the  spray.}
Finally, we shall approximate the spray $H_{\hat g,\hat u}$  by  a   $\Pcal$-dominating spray $\r^N\times (\Sigma\setminus (E\cup E'))\to  \c^n$ of maps in $\Ccal^0(\Gamma,u)\cap \Ocal_{\infty,\Igot}(\Sigma|E\cup E'\cup \supp([\theta]),  \Sgot_*^{n-1})$ with the same core $\hat u$ of   $H_{\hat g,\hat u}$, enabling us to settle the aforementioned  difficulty.

Consider the spray of $1$-forms $\Phi_{\hat g,\hat u}:\r^N \to \Omega_{\infty}(\Sigma|E\cup E')^n$ given by
    \begin{multline*}
 \r^N\ni\xi\longmapsto \Phi_{\hat g,\hat u}(\xi)=\big((1+\alpha \xi\cdot {\hat g})^2\hat \Phi_1,(1+\bar \alpha \xi\cdot {\hat g})^2 \hat \Phi_2, \\(1+\alpha \xi\cdot {\hat g})(1+\bar \alpha \xi\cdot {\hat g}) \hat \Phi_3,
 \ldots,(1+\alpha \xi\cdot {\hat g})(1+\bar \alpha \xi\cdot {\hat g}) \hat \Phi_n\big).
 \end{multline*} 
 Observe that $\Phi_{\hat g,\hat u}(\xi)\vert_{K\setminus E'}=H_{\hat g,\hat u}(\xi,\cdot)\theta$ for every $\xi\in \r^N$ (see \eqref{eq:hatHg} and \eqref{eq:hatu}), and so
 \begin{equation}\label{eq:TK}
 \text{$\Phi_{\hat g,\hat u}(\xi)$ vanishes nowhere on $K\setminus E'$;}
 \end{equation} 
use (C3) and the fact that $\theta$ has no zeroes on $K$. However, as we emphasized above, it 
 might have zeroes in $\Sigma\setminus (K\cup E)$, thereby failing to take its values in $\Sgot_*^{n-1}$. We shall now deal with this issue.   
 
  By \cite[Proposition 4.1]{AlarconLopez2022APDE} we can approximate $\hat g|_K\in\Ocal_{\infty,\Igot}(K)^N$ (see (A1)) uniformly on the Runge compact set $K\subset \Sigma\setminus E$ by a function $h=(h_{i,j})\in \Ocal_\infty(\Sigma|E)^N$ such that $h-\hat g$ vanishes to any given order at every point of $\Lambda\cup E'$, $h$ vanishes to any given order at every point of $ T \setminus K$ (see \eqref{eq:T} and \eqref{eq:TT0}), and each $h_{i,j}$ has an effective pole at every point of $E$. Set 
\[
\hat f=(\hat f_{i,j})=\frac12(h+\bar h\circ \Igot)\in \Ocal_{\infty,\Igot}(\Sigma|E)^N
\]
and observe that:
\begin{enumerate}[{\rm ({F}1)}]
   \item $\hat f-\hat g\in \Ocal_{\infty,\Igot} (\Sigma| E)^N$ is close to $0$ uniformly on $K$.   
   \item $\hat f-\hat g$ vanishes to any given order at every point of $\Lambda\cup E'$.
   \item  $\hat f$ vanishes to any given order at every point of $T\setminus K$; see \eqref{eq:T} and recall that $\Igot(T)=T$.
   \item  For every $p\in E$, each $\hat f_{i,j}$ has an effective pole at $p$ and the map $\{1,\ldots,2n\}\times \{0,\ldots,\ell\}\to \n$ sending $(i,j)$ to the pole order of $\hat f_{i,j}$ at $p$ is injective.
\end{enumerate}
For  ensuring (F4), we choose $h$ such that each $h_{i,j}$ has an effective pole at every point in $E$ and the map $E\times \{1,\ldots,2n\}\times \{0,\ldots,\ell\}\to \n$ sending $(p,i,j)$ to the pole order of $h_{i,j}$ at $p$ is injective. This is possible by an inspection of the proof of  \cite[Proposition 4.1]{AlarconLopez2022APDE}.

Finally, consider the spray of $1$-forms $\Phi_{\hat f,\hat u}:\r^N \to \Omega_{\infty}(\Sigma|E\cup E')^n$ given by
    \begin{multline*}
 \r^N\ni\xi\longmapsto \Phi_{\hat f,\hat u}(\xi)=\big((1+\alpha \xi\cdot {\hat f})^2\hat \Phi_1,(1+\bar \alpha \xi\cdot {\hat f})^2 \hat \Phi_2, \\(1+\alpha \xi\cdot {\hat f})(1+\bar \alpha \xi\cdot {\hat f}) \hat \Phi_3,
 \ldots,(1+\alpha \xi\cdot {\hat f})(1+\bar \alpha \xi\cdot {\hat f}) \hat \Phi_n\big).
 \end{multline*} 
 Observe that (B2) and (F4) guarantee that $\Phi_{\hat f,\hat u}(\xi)$ has an effective pole at every point of $E$ for every $\xi\in \r^N$.
We claim that 
\begin{equation}\label{eq:Phihatno0}
 	\text{$\Phi_{\hat f,\hat u}(\xi)$ has no zeroes on $\Sigma\setminus (E\cup E')$},
\end{equation} 
and consequently,
\begin{equation}\label{eq:Phihatno02}
	\Phi_{\hat f,\hat u}(\xi)\in\Omega_{\infty,\Igot}(\Sigma|E\cup E',\Sgot_*^{n-1})\quad \text{for all $\xi\in \r^N$}.
\end{equation} 
Indeed, by \eqref{eq:alpha} and \eqref{eq:T} the $1$-form $\Phi_{\hat f,\hat u}(\xi)$ vanishes nowhere on $\Sigma\setminus (E\cup E'\cup T)$. On the points of $K\cap T\subset \Lambda$ (see \eqref{eq:TT0}) we have that $\hat f=\hat g$ by (F2), and so $\Phi_{\hat f,\hat u}(\xi)=\hat\Phi_{\hat g,\hat u}(\xi)$ vanishes nowhere on $K\cap T$ by \eqref{eq:TK}; while on those of $T\setminus K$ we have that $\hat f=0$ by (F3), and hence $\Phi_{\hat f,\hat u}(\xi)=\hat \Phi$ vanishes nowhere on $T\setminus K$ since $\hat\Phi\in \Omega_{\infty,\Igot}(\Sigma|E'\cup E,\Sgot_*^{n-1})$. 

We shall now look for a point $\xi_0\in\Delta$ with $\Pcal(\Phi_{\hat f,\hat u}(\xi)/\theta)=0$. Setting the spray
\[
	H_{\hat f,\hat u}:\r^N\times (K\setminus  E')\to   \Sgot_*^{n-1},\quad H_{\hat f,\hat u}(\xi,\cdot)= \Phi_{\hat f,\hat u}(\xi)/\theta,
\]
we have that:
  \begin{enumerate}[{\rm ({G}1)}]
   \item $H_{\hat f,\hat u}(\xi,\cdot)-H_{\hat g,\hat u}(\xi,\cdot)\in \Ocal(K)^n$ for all $\xi\in \r^N$; see (F2).
   \item $H_{\hat f,\hat u}-H_{\hat g, \hat u}:\r^N\times  K\to \c^n$ converges   uniformly on compact subsets of $\r^N\times K$  to  $0$ as $\hat f$ converges uniformly on $K$  to $\hat g$; see (F1).
    \item $ H_{\hat f,\hat u}(\xi,\cdot):K\setminus E'\to \c^n$ vanishes nowhere  for every $\xi\in \r^N$; see \eqref{eq:Phihatno0}.
   \item $H_{\hat f,\hat u}(\xi,\cdot)\in \Ccal^0(\Gamma, u)$ for all $\xi\in \r^N$; see  Definition \ref{de:Cu} and  use (G1) and (C4).
      \end{enumerate}
Hence we can  consider the vectorial quadratic polynomial period map  
  \begin{equation}\label{eq:Qhatf}
 \Qcal_{\hat f,\hat u}:\r^N\to \r^n\times (\c^n)^\ell, \quad \Qcal_{\hat f,\hat u}(\xi)=  \Pcal\big(H_{\hat f,\hat u}(\xi,\cdot)\big).
 \end{equation}
 If the approximation of $\hat g$ by $\hat f$ on $K$ is close enough, then (G2), \eqref{eq:Qhatf}, \eqref{eq:hatQ}, (E2), and  (E3) ensure that
  \begin{enumerate}[{\rm ({H}1)}]
  \item $\Qcal_{\hat f,\hat u}:\Delta \to \r^{n+2n \ell}$  is submersive   and  $0\in\Qcal_{\hat f,\hat u}(\mathring \Delta) \subset \r^{n+2n \ell}$.
    \item  $|H_{\hat f,\hat u}(\xi,\cdot)-u|(p)<\epsilon'$ for every $p\in K$ and $\xi \in \Delta$.
\end{enumerate}
 Consider any $\xi_0\in \Delta$ with $\Qcal_{\hat f,\hat u}(\xi_0)=0$; see  (H1). 
Since $ H_{\hat f,\hat u}(\xi_0,\cdot)\in \Ccal^0(\Gamma,u)$ and $  \Pcal\big(H_{\hat f,\hat u}(\xi_0,\cdot)\big)=0\in  \r^{n+2n \ell}$, the $1$-form $\Phi_{\hat f,\hat u}(\xi_0)-\Phi=(H_{\hat f,\hat u}(\xi_0,\cdot)-u)\theta\in \Omega(K)^n$ is exact on $K$ (see \eqref{eq:Pcal}) and it vanishes to a given order at every point of $E'\cup \Lambda$ (see (A3), (B3), and (F2)). 
In particular, the map 
\begin{equation}\label{eq:Hfinal}
	H:=H_{\hat f,\hat u}(\xi_0,\cdot)\in\Ocal_\infty(\Sigma|E\cup E'\cup \supp([\theta]))^n
\end{equation} 
satisfies conditions (P1), (P2), and (P3) at the end of Step 1.

\noindent{\bf Step 6: Completion of the proof.}  
Recall \eqref{eq:Thetabiho}, \eqref{eq:Phihatno02}, and \eqref{eq:Hfinal}, and define
   \[
\Psi=\big(  \Xi^{-1}\circ H\big) \theta\in \Omega_{\infty,\Igot}(\Sigma|E'\cup E,\Agot_*^{n-1}).
  \]
   By \eqref{eq:Phi}, (P2), and (P3), we have that $\Psi-\partial X \in \Omega_{\Igot}(K)^n$ is exact and close to $0$ on $K$, and it vanishes to a given order at every point of $E'\cup \Lambda$. 
 Since $K$ is a strong deformation retract of $\Sigma\setminus E$, we can define the $\Igot$-invariant  conformal minimal immersion 
 \[
 Y: \Sigma\setminus (E\cup E')\to \r^n, \quad Y(p)=X(p_0^+)+\Re\int_{p_0^+}^p 2\Psi,
 \]
with $\Flux_Y=\Flux_X=\Fgot$, where $p_0^+$ is the point in  \eqref{eq:p0+}. As $\Psi-\partial X=\partial Y-\partial X$  vanishes to a given order at every point of $E'\cup \Lambda$ and  $  \Pcal(H)=0\in  \r^{n+2n \ell}$ by (P3), then $Y-X$ extends  harmonically to $K$ and it vanishes  to the desired order  at every point of $E'\cup \Lambda$.  Moreover, (P2), (B2), and (F4) ensure that $\Psi=\partial Y$ has an effective pole at every point in $E\cup E'$, and hence $Y$ is complete and of finite total curvature. Finally, property (H2) implies that 
\[
|Y-X|(p)\leq \epsilon' C(\theta)<\epsilon,
\]
provided that $\epsilon'>0$ is sufficiently small, where $C(\theta)$ is the diameter of the compact Riemannian surface $(K,|\theta|^2)$; see Claim \ref{cl:theta}.  This completes the proof.


\subsection*{Acknowledgements}
The authors are partially supported by the State Research Agency (AEI) via the grant no.\ PID2023-150727NB-I00 funded by MICIU/AEI/10.13039/501100011033 and ERDF/EU, Spain.




\medskip
\noindent Antonio Alarc\'{o}n, Francisco J.\ L\'opez

\noindent Departamento de Geometr\'{\i}a y Topolog\'{\i}a e Instituto de Matem\'aticas (IMAG), Universidad de Granada, Campus de Fuentenueva s/n, E--18071 Granada, Spain.

\noindent  e-mail: {\tt alarcon@ugr.es}, {\tt fjlopez@ugr.es}

\end{document}